\documentclass[11pt]{article}
\usepackage[utf8]{inputenc}

\usepackage{amsthm, amsfonts, amssymb, amsmath,enumitem, natbib, bbm, mathabx}
\usepackage{cases, amsthm}
\usepackage{fullpage}
\usepackage{mathtools}
\mathtoolsset{showonlyrefs}
\usepackage{ wasysym} 
\usepackage{fancyhdr}
\usepackage{graphicx}
\usepackage{bbm,bm}
\usepackage{caption}
\usepackage{subcaption}
\usepackage{algorithm}
\usepackage{algorithmic}
\usepackage{authblk}
 \usepackage{setspace}
\let\Algorithm\algorithm
\renewcommand\algorithm[1][]{\Algorithm[#1]\setstretch{1.6}}

\usepackage[colorlinks=true, pdfstartview=FitV,
linkcolor=blue, citecolor=blue, urlcolor=blue]{hyperref}
\usepackage{cleveref}
\newtheorem{definition}{Definition}[section]
\newtheorem{theorem}[definition]{Theorem}

\newtheorem{corollary}[definition]{Corollary}

\def\R{\mathbb{R}}

\def\E{\mathbb{E}}
\def\I{\mathbf{I}}

\def\N{\mathbb{N}}

\def\F{\mathcal{F}}

\def\A{\mathcal{A}}
\def\P{\mathbb{P}}
\newcommand{\gauss}{\mathcal{N}}
\newcommand{\id}{\mathbf{I}}

\newcommand{\abs}[1]{\left\lvert #1 \right\rvert}
\newcommand{\norm}[1]{\left\| #1 \right\|}
\newcommand{\cov}{\mathrm{Cov}}

\usepackage{xcolor}





\def\R{\mathbb{R}}

\def\F{\mathcal{F}}
\def\E{\mathbb{E}}

\def\A{\mathcal{A}}


\def\R{\mathbb{R}}

\def\F{\mathcal{F}}
\def\E{\mathbb{E}}


\usepackage[top=1in, bottom=1in, left=1in, right=1in]{geometry}
\title{Central Limit Theorems and Approximation Theory: Part I\footnote{This is a lightly edited version of a project report by Arisina Banerjee, submitted to Indian Statistical Institute (Kolkata) for partial fulfillment of the requirements of Masters in Statistics (M. Stat.). Thanks are due to Prof. Soumendu Sundar Mukherjee of Indian Statistical Institute (Kolkata) and to Prof. Alessandro Rinaldo of Carnegie Mellon University for insightful discussions.}}

\author[1]{Arisina Banerjee}
\author[2]{Arun Kumar Kuchibhotla}
\affil[1]{Indian Statistical Institute}
\affil[2]{Department of Statistics \& Data Science, Carnegie Mellon University}
\date{}

\begin{document}

\maketitle

\begin{abstract}  
Central limit theorems (CLTs) have a long history in probability and statistics. They play a fundamental role in constructing valid statistical inference procedures. Over the last century, various techniques have been developed in probability and statistics to prove CLTs under a variety of assumptions on random variables. Quantitative versions of CLTs (e.g., Berry--Esseen bounds) have also been parallelly developed. In this article, we propose to use approximation theory from functional analysis to derive explicit bounds on the difference between expectations of functions.
\end{abstract}

\section{Description of the problem}
\label{sec: Problem Statement}

Suppose we have a sequence of $d$-dimensional random vectors $\{X_i\}_{i\in\mathbb{N}}$ and a $d$-dimensional random vector ${Z}$ such that ${Z}$ is a gaussian random variable distributed with mean equal to  $\mathbb{E}\left(n^{-1/2}\sum_{i=1}^n {X}_i\right)$ and variance equal to $\mathrm{Var}\left(n^{-1/2}\sum_{i=1}^n {X}_i\right)$. For notational simplicity, set $S_{n,{X}} = n^{-1/2}\sum_{i=1}^n {X}_i$.

Note that, if ${X_1},\cdots,{X_n}$ are i.i.d distributed with mean ${0}$ and variance ${\Sigma}$, then ${Z}\sim\gauss({0},{\Sigma})$. If ${X_1},\cdots,{X_n}$ are independent with mean ${0}$, then ${Z}\sim\gauss\left({0},n^{-1}\sum_{i=1}^{n}\mathrm{Var}({X_i})\right)$. 

Suppose $f$ is a Borel measurable function. We wish to bound
\begin{equation}
\Delta_f = \abs{\mathbb{E}f\left(S_{n,{X}}\right) - \mathbb{E}f\left({Z}\right)},
\end{equation} 
with a constant which depends on $n$, $d$ and the fucntion $f$. Classical Berry--Esseen bounds~\citep{bentkus2005lyapunov,raivc2019multivariate} provide inequalities for $\Delta_f$ when $f(x) = \mathbf{1}{x\in A}, x\in\mathbb{R}^d$ for some sets $A$ (e.g., convex sets, Euclidean balls). Results that bound $\Delta_f$ for Borel measurable functions with specific polynomial growth are also well-known; see~\citet[Theorem 1, Chapter 1, Section 3]{Sazonov},~\citet[Thms 13.2, 13.3]{bhattacharya2010normal}, and~\citet[Theorem 4.1]{angst2017weak}. These bounds are derived using smoothing inequalities and are valid for all Borel measurable functions. Unfortunately, these bounds are not sharp enough to imply correct dimension on dimension or smoothness for high-dimensional functions that are ``highly'' smooth (e.g., high-dimensional functions that depend only on a subset of coordinates); see, for example,~\citet[Thms 3.2--3.4]{bentkus2003new} for sharp bounds on $\Delta_f$ for functions $f$ in H{\"o}lder classes. Further, the dependence on dimension is much better than that implied by the general result in~\cite{Sazonov} and~\cite{bhattacharya2010normal}.

In this paper, we propose an approximation theory and level sets based approach to obtain bounds for $\Delta_f$. The bounds we obtain have sharp dependence on the dimension as well as the sample size even when the dimension grows faster than the sample size. On the flip side, our bounds do not apply to all Borel measurable functions but only to a special class of functions.

The remaining article is organized as follows. In Section~\ref{sec:literature-review}, we provide a discussion of existing Berry--Esseen bounds for independent random vectors from~\cite{bentkus2005lyapunov} and~\cite{raivc2019multivariate}. These results will be the backbone of our approach. In Section~\ref{sec:bound_when_the_conditions_of_Bentkus_hold_true}, we provide our first result that bounds $\Delta_f$, for a bounded $f$, in terms of the upper/lower level sets of $f$. In Section~\ref{sec:applications_thm}, we provide an application of this result to bound $\Delta_f$ for bounded quasi-concave functions. In Section~\ref{sec:non_uniform_BE_bnd}, we provide a discussion of non-uniform Berry--Esseen bound and apply our main result to get bounds for functions in Barron space. We conclude the article with a brief discussion in Section~\ref{sec:discussion}.

\section{Literature Survey}\label{sec:literature-review}

To begin with, we take a look at Theorem 1.2 of \cite{bentkus2005lyapunov}. Firstly, we define some notations as per \cite{bentkus2005lyapunov}.
Let $X_1,\cdots,X_n$ be independent random vectors with a common mean $\E X_j = 0$. We write, $S = X_1 + \cdots + X_n$. Throughout we assume that S has a non-degenerated distribution in the sense that the covariance operator, say $C^2 = \cov(S)$, is invertible (where, $C$ stands for the positive root of $C^2$). Let $Z$
be a Gaussian random vector such that $\E Z = 0$ and $\cov(S)$ and $\cov(Z)$ are equal. We further write
\[\beta = \beta_1 + \cdots + \beta_n, \quad \beta_k = \E|C^{-1}X_k|^3.\]
and for any collection of sets $\mathcal{A}$, define
\[\Delta(\mathcal{A}) = \sup_{A \in \mathcal{A}}|\mathbb{P}\{S \in A\} - \mathbb{P}\{Z \in A\}|.\]

\begin{theorem}
\label{theorem 1.2 of Bentkus 2005}

Let a class $\mathcal{A}$ of convex sets  of subsets $A \subset \mathbb{R}^d$ satisfy the following conditions. 

\begin{itemize}
    \item[(i)] Class $\mathcal{A}$ is invariant under affine symmetric transformations, i.e., $DA + a \in A$ if $a \in \mathbb{R}^d$ and $D : \mathbb{R}^d \longrightarrow \mathbb{R}^d$ is a linear symmetric invertible operator. 
    \item[(ii)] Class $\mathcal{A}$ is invariant under taking $\varepsilon$-neighbourhoods for all $\varepsilon > 0$. More precisely, $A^\varepsilon, A^{-\varepsilon} \in \mathcal{A}$, if $A \in \mathcal{A}$. Here, 
    \[A^\varepsilon = \{x \in \R^d : \rho_A(x) \leq \varepsilon\}  \quad \text{and} \quad A^{-\varepsilon} = \{x \in \R^d : B_\varepsilon(x) \subset A\}\]
    where, $\rho_A(x) = \inf_{y \in A} |x-y|$ is the distance between $A \subset \R^d$ and $x \in \R^d$ and $B_\varepsilon(x) = \{y \in \R^d : |x-y| \leq \varepsilon\}$.
\end{itemize}
    
    Let $\phi$ denote the standard normal distribution.
    Furthermore, assume that, $\mathcal{A}$ and the standard normal distribution satisfy the condition that, there exists constants, say $a_d = a_d(\mathcal{A})$, called the isoperimetric constant of $\mathcal{A}$, depending only on $d$ and $\mathcal{A}$, such that, 
    \[\phi\{A^\varepsilon \setminus A\} \leq a_d\varepsilon \quad \text{and} \quad \phi\{A \setminus A^{-\varepsilon}\}  \leq a_d\varepsilon \quad \text{for all} \quad A \in \mathcal{A} \quad \text{and} \quad \varepsilon > 0.\]
    
If these two conditions and the assumption hold, then there exists
an absolute constant $M > 0$ such that
\[\Delta(\mathcal{A}) \leq Mb_d\beta, \quad b_d = \max\{1,a_d(\mathcal{A})\}.\]
\end{theorem}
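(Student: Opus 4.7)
The plan is to combine Gaussian smoothing of the indicator $\mathbf{1}_A$ with a Lindeberg-type replacement argument, and to control the smoothing error through the isoperimetric hypothesis on $\mathcal{A}$; the linear-in-$\beta$ rate will come from a self-bounding step in which the smoothing scale is tied to the quantity we are estimating.

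First, using the affine invariance in hypothesis~(i), I would apply $C^{-1}$ to every $X_k$, to $Z$, and to each $A\in\mathcal{A}$. The class is unchanged and we may assume $\cov(S)=\cov(Z)=\id$, so $\beta_k=\E|X_k|^3$ and $a_d$ is unaffected. For $\varepsilon>0$, I would construct Gaussian mollifications $f_\varepsilon^\pm$ with $f_\varepsilon^-\le \mathbf{1}_A\le f_\varepsilon^+$, vanishing or equal to one outside an $\varepsilon$-tubular neighbourhood of $\partial A$. Hypothesis~(ii) together with the isoperimetric assumption gives
\[
\Pr\{S\in A\}-\Pr\{Z\in A\}\le \bigl|\E f_\varepsilon^+(S)-\E f_\varepsilon^+(Z)\bigr|+\phi(A^\varepsilon\setminus A)\le \bigl|\E f_\varepsilon^+(S)-\E f_\varepsilon^+(Z)\bigr|+a_d\varepsilon,
\]
together with the symmetric lower bound via $f_\varepsilon^-$, so everything reduces to bounding the smooth-function discrepancy.

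For this discrepancy I would use Lindeberg's one-at-a-time replacement, $\E f_\varepsilon^+(S)-\E f_\varepsilon^+(Z)=\sum_k(\E f_\varepsilon^+(T_k+X_k)-\E f_\varepsilon^+(T_k+Z_k))$ where $T_k$ mixes the Gaussianised and the original coordinates, followed by third-order Taylor expansion; matching means and covariances kills the first two orders and leaves a remainder bounded by $\beta$ times a third-derivative norm of $f_\varepsilon^+$. The naive estimate $\norm{\nabla^3 f_\varepsilon^+}_\infty\lesssim \varepsilon^{-3}$ only yields a suboptimal rate of the form $(a_d\beta)^{1/4}$ after optimising $\varepsilon$. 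To recover the linear rate, I would sharpen the derivative estimate by exploiting the Gaussian density of $T_k$: one Gaussian integration by parts transfers a derivative from $f_\varepsilon^+$ onto the Gaussian density of $T_k$, converting a factor of $\varepsilon^{-1}$ into a factor that is controlled, via the isoperimetric hypothesis, by $a_d$ applied to the tubular level-set $\phi(\partial A^\varepsilon)\le a_d$. Combining such estimates would produce a recursive inequality of the form
\[
\Delta(\mathcal{A})\le C\,a_d\,\varepsilon+C\,a_d\,\beta\,\varepsilon^{-2}+C\,\beta\,\varepsilon^{-2}\,\Delta(\mathcal{A}),
\]
and choosing $\varepsilon$ of order $(a_d\beta)^{1/3}$, or iterating with $\varepsilon$ tracking the running estimate of $\Delta(\mathcal{A})$, closes the recursion to $\Delta(\mathcal{A})\le Mb_d\beta$.

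The main obstacle is precisely this sharpening: exchanging derivatives on the smoothed envelope for isoperimetric surface integrals of the class $\mathcal{A}$ in a way that keeps $a_d$ linear while absorbing enough powers of $\varepsilon^{-1}$ to upgrade the naive $(a_d\beta)^{1/4}$-type estimate to a linear bound. The bookkeeping of how Gaussian integration by parts interacts with the tubular-neighbourhood hypothesis, together with the careful iteration needed to cancel the $\Delta(\mathcal{A})$ term on the right-hand side, constitutes the \emph{technical heart} of the argument; in contrast, the normalization, smoothing decomposition, and the Lindeberg step itself are essentially routine.
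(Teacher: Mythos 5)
First, note that the paper does not actually prove this statement: it is Theorem~1.2 of Bentkus (2005), quoted verbatim as background for the later sections, so there is no internal proof to compare yours against and your attempt must stand on its own as a proof of Bentkus's theorem.

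Your overall architecture --- normalize by $C^{-1}$ using affine invariance (i), smooth the indicator, run a Lindeberg replacement with a third-order Taylor expansion, and control the smoothing error via the isoperimetric hypothesis --- is indeed the correct family of techniques. But there is a genuine gap at exactly the point you label the technical heart. The recursion you display,
\[
\Delta(\mathcal{A})\le C\,a_d\,\varepsilon+C\,a_d\,\beta\,\varepsilon^{-2}+C\,\beta\,\varepsilon^{-2}\,\Delta(\mathcal{A}),
\]
does not close to a linear bound: with $\varepsilon\asymp(a_d\beta)^{1/3}$ the two inhomogeneous terms are of order $a_d^{4/3}\beta^{1/3}$ and $(a_d\beta)^{1/3}$ respectively, so after absorbing the $\Delta(\mathcal{A})$ term you obtain at best a fractional power of $\beta$, not $Mb_d\beta$. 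Indeed no choice of $\varepsilon$ can work here: the smoothing error $a_d\varepsilon$ alone forces $\varepsilon\lesssim\beta$ for a linear conclusion, and then the coefficient $C\beta\varepsilon^{-2}\gtrsim\beta^{-1}$ of $\Delta(\mathcal{A})$ is large, so the self-bounding term cannot be absorbed. The single Gaussian integration by parts you invoke, trading one factor of $\varepsilon^{-1}$ for one factor of $a_d$, does not repair this. The actual route to the linear rate is a considerably more delicate Bergstr\"om--Bentkus induction on $n$: one bounds the relevant derivative of the mollified indicator by $\varepsilon^{-3}$ times the probability that the leave-one-out sum $T_k$ lands in the $\varepsilon$-tube of $\partial A$, controls that probability by applying the induction hypothesis to $T_k$ itself (after re-standardizing its covariance --- this is where hypothesis (i) is genuinely needed, since $\cov(T_k)\ne\id$) together with the isoperimetric bound, and treats the large values of $X_k$ by truncation rather than by a raw third-order remainder. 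That bookkeeping is precisely what upgrades the easy fractional-power estimate to the claimed $\Delta(\mathcal{A})\le Mb_d\beta$, and it is missing from your sketch.
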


\textbf{Note}: Conditions (i)-(ii) on the class $\mathcal{A}$ can be relaxed using slightly more refined techniques. In the i.i.d. case one can relax requirement (i) on $\mathcal{A}$, assuming that $\mathcal{A}$ is invariant under rescaling by scalars and shifting. 

\paragraph{Example bounds on isoperimetric constant.}
\begin{enumerate}
    \item $a_d(\A) = (2\pi)^{-1/2}$ in the case of the class $\mathcal{H}$ of all affine half-spaces of $\R^d$~\citep{bentkus2003dependence}.
    
    \begin{definition}
    A class of half-spaces $\A^\text{half space}$ is defined as $\A^\text{half space} = \{A_{a,b}:a\in\R^d,b\in\R\}$, where, $A_{a,b} = \{x : a^{\top}x \leq b\}$.
    \end{definition}
    
    \item For the class $\mathcal{C}$ of all convex subsets of $\R^d$, we have $a_d(\mathcal{C}) \leq 4d^{1/4}$~\citep[Lemma 2.6]{bentkus2003dependence}. 

    \item For the class $\mathcal{B}$ of all Euclidean balls of $\mathbb{R}^d$, we have $a_d(\mathcal{B}) \leq C$ for some absolute constant $C$~\citep[Lemma 8.1]{zhilova2020new}.
    
\end{enumerate}

Now, we take a look at a more general form of \cref{theorem 1.2 of Bentkus 2005} as given by \cite{raivc2019multivariate}. We define some notations as per \cite{raivc2019multivariate}.
Let $\mathcal{I}$ be a countable set (either finite or infinite) and let $\{X_i\}_{i\in \mathcal{I}}$, be independent
$\R^d$-valued random vectors. Assume that $\E X_i = 0$ for all $i$ and that $\sum_{i \in \mathcal{I}} \text{Var}(X_i) = {I}_d$.
It is well known that in this case, the sum $W := \sum_{i \in \mathcal{I}}X_i$ exists almost surely and that $\E W = 0$ and $\text{Var}(W) = {I}_d$.

For a measurable set $A \subset \R^d$, let $\gauss(\mu,\Sigma)\{A\} := \mathbb{P}(Z \in A)$ and for a measurable function $f : \R^d \longrightarrow \R$, let $\gauss(\mu,\Sigma)\{f\} := \E f(Z)$, where $Z \sim \gauss(\mu,\Sigma)$.

We shall consider a class $\mathcal{A}$ of measurable sets in $\R^d$. For each $A \in \mathcal{A}$ , we take a measurable function $\rho_A : \R^d \longrightarrow \R$. The latter can be considered as a generalized signed distance function. Typically, one can take $\rho_A = \delta_A$, where, 
\begin{equation*}
\delta_A(x) :=
\begin{cases}
    -\text{dist}(x,\R^d\setminus A), &\quad x \in A \\
    \text{dist}(x,A), &\quad x \notin A.
\end{cases} 
\end{equation*}
But we allow for more general functions. For each $t \in \R$, we define \[A^{t|\rho} = \{x : \rho_A(x) \leq t\}.\]

We define the generalised Gaussian perimeter as:

\[\gamma^*(A \setminus \rho) := \text{sup} \bigg\{\frac{1}{\varepsilon}\gauss(0,{I}_d)\{A^{\varepsilon|\rho}\setminus A\},\{\frac{1}{\varepsilon}\gauss(0,{I}_d)\{A\setminus A^{-\varepsilon|\rho}\};\varepsilon>0\bigg\}.\]

\[\gamma^*(\A \setminus \rho) := \sup\gamma^*(A \setminus \rho).\]

Now, we consider the following assumptions:

\begin{itemize}
    \item[(A1)] $\mathcal{A}$ is closed under translations and uniform scalings by factors greater than one.
    \item[(A2)] For each $A \in \mathcal{A}$ and $t \in \R$, $A^{t|\rho} \in \mathcal{A} \cup \{\phi,\R^d\}$.
    \item[(A3)] For each $A \in \mathcal{A}$ and $\varepsilon > 0$, either $A^{-\varepsilon|\rho} = \phi$ or $\{x : \rho_{A^{-\varepsilon|\rho}}(x) < \varepsilon \subseteq A\}$.
    \item[(A4)] For each $A \in \mathcal{A}$, $\rho_A(x) \leq 0$ for all $x \in A$ and $\rho_A(x) \geq 0$ for all $x \notin A$.
    \item[(A5)] For each $A \in \mathcal{A}$ and each $y \in \R^d$, $\rho_{A+y}(x + y) = \rho_A(x)$ for all $x \in \R^d$.
    \item[(A6)] For each $A \in \mathcal{A}$ and each $q \geq 1$, $|\rho_{qA}(qx)| \leq q|\rho_{A}(x)|$ for all $x \in \R^d$.
    \item[(A7)] For each $A \in \mathcal{A}$, $\rho_A$ is non-expensive on $\{x : \rho_A(x) \geq 0\}$, i.e., $|\rho_A(x) - \rho_A(y)| \leq |x - y|$ for all $x,y$ with $\rho_A(x) \geq 0$ and $\rho_A(y) \geq 0$.
    \item[(A8)] For each $A \in \mathcal{A}$, $\rho_A$ is differentiable on $\{x : \rho_A(x) > 0\}$. Moreover, there exists $\kappa \geq 0$, such that \[|\nabla\rho_A(x) - \nabla\rho_A(y)| \leq \frac{\kappa|x - y|}{\text{min}\{\rho_A(x),\rho_A(y)\}}\] for all $x,y$ with $\rho_A(x) > 0$ and $\rho_A(y) > 0$, where $\nabla$ denotes the gradient.
\end{itemize}

We also consider another optional assumption as follows:

\begin{itemize}
    \item[(A1')] $\mathcal{A}$ is closed under symmetric linear transformations with the smallest eigenvalue at least one.
\end{itemize}

\begin{theorem}

\label{theorem 1.3 of Raic 2019}
Let $W = \sum_{i \in \mathcal{I}}X_i$ and let $\mathcal{A}$ be a class of sets meeting assumptions (A1)–(A8) (along with the underlying functions $\rho_A$). Then for each
$A \in \mathcal{A}$ , the following estimate holds true 

\[|\mathbb{P}(W \in A) - \gauss(0,{I}_d)\{A\}| \leq \max\{27,1+53\gamma^*(\mathcal{A}|\rho)\sqrt{1+\kappa}\}\sum_{i \in \mathcal{I}}\E|X_i|^3. \]

In addition, if $\mathcal{A}$ also satisfies assumption (A1'), then the preceding bound can be improved to

\[|\mathbb{P}(W \in A) - \gauss(0,{I}_d)\{A\}| \leq \max\{27,1+50\gamma^*(\mathcal{A}|\rho)\sqrt{1+\kappa}\}\sum_{i \in \mathcal{I}}\E|X_i|^3 .\]

\end{theorem}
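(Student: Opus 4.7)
The plan is to combine Stein's method for the multivariate Gaussian with a smoothing argument based on the generalized distance function $\rho_A$, following the template developed by Bentkus, Bhattacharya--Rao, and Raič himself. The overall strategy is standard: approximate $\mathbf{1}_A$ by a smooth function $f_\varepsilon$ at scale $\varepsilon > 0$, bound the Gaussian and $W$ expectations of $f_\varepsilon$ via a Lindeberg swap controlled by the Stein equation, and then undo the smoothing using the perimeter bound $\gamma^*(\mathcal{A}|\rho)$.

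First I would build $f_\varepsilon$ by composing a smooth decreasing one-dimensional cutoff $\psi$ (equal to $1$ on $(-\infty, -1]$ and to $0$ on $[0, \infty)$) with $\rho_A/\varepsilon$, namely $f_\varepsilon(x) = \psi(\rho_A(x)/\varepsilon)$. The Lipschitz bound from (A7) and the near-boundary regularity from (A8) let me bound $\|\nabla f_\varepsilon\|_\infty$ and $\|\nabla^2 f_\varepsilon\|_\infty$ by order $\varepsilon^{-1}$ and $\varepsilon^{-2}(1+\kappa)$, respectively. On the Gaussian side, the very definition of $\gamma^*$ directly yields $|\gauss(0,\id)\{f_\varepsilon\} - \gauss(0,\id)\{\mathbf{1}_A\}| \leq \varepsilon \gamma^*(\mathcal{A}|\rho)$.

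Next I would pass to the Gaussian Stein equation $\Delta h(x) - \langle x, \nabla h(x)\rangle = f_\varepsilon(x) - \gauss(0,\id)\{f_\varepsilon\}$, solved by the Ornstein--Uhlenbeck semigroup formula $h(x) = -\int_0^\infty (P_t f_\varepsilon(x) - \gauss(0,\id)\{f_\varepsilon\})\, dt$. Standard smoothing estimates for $P_t$ convert the derivative bounds on $f_\varepsilon$ into a bound on $\|\nabla^3 h\|_\infty$ of order $\varepsilon^{-1}\sqrt{1+\kappa}$: the square root arises because the OU kernel contributes a $t^{-1/2}$ for each extra derivative at small $t$, and the resulting time integral, split optimally between short- and long-time regimes, produces $\sqrt{1+\kappa}$ rather than $(1+\kappa)$. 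A Taylor expansion to third order combined with the telescoping Lindeberg argument --- replacing each $X_i$ by an independent centered Gaussian $\tilde X_i$ with the same covariance --- then yields $|\E f_\varepsilon(W) - \gauss(0,\id)\{f_\varepsilon\}|$ bounded by a constant times $\varepsilon^{-1}\sqrt{1+\kappa}\sum_{i \in \mathcal{I}} \E|X_i|^3$, since the first- and second-order Taylor terms cancel by matched moments.

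Finally, to undo the smoothing on the $W$-side I would sandwich $\{W \in A\}$ between smoothed functions built on the enlargements $A^{\pm\varepsilon|\rho}$, which lie in $\mathcal{A}\cup\{\emptyset,\R^d\}$ by (A2) with compatibility enforced by (A3)--(A6), and translation/scale closure from (A1). Applying the previous step to each enlargement, adding the Gaussian perimeter bound for the transferred mass, and optimizing $\varepsilon$ to balance the $\varepsilon\gamma^*$ term against the $\varepsilon^{-1}\sqrt{1+\kappa}\sum_i \E|X_i|^3$ term yield the claimed estimate, with the absolute constant $27$ taking over in the regime where the Lyapunov sum is so large that the smoothing scheme breaks down. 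Under the stronger closure (A1'), a symmetric linear transformation with smallest eigenvalue $\geq 1$ replaces the isotropic enlargement and permits a slightly sharper $\varepsilon$, which is what drops $53$ to $50$. The main obstacle will be the third step: tracking absolute constants tightly enough to reach the explicit numbers $53$ (or $50$) in front of $\gamma^*\sqrt{1+\kappa}$. The delicate interaction between the singularity of $\nabla \rho_A$ near $\{\rho_A = 0\}$ encoded in (A8) and the OU-weighted time integrals requires a careful choice of $\psi$ and a careful split of the Stein time integral to achieve the $\sqrt{1+\kappa}$ factor rather than a cruder $(1+\kappa)$, and this is where most of the creativity in Raič's original argument lies.
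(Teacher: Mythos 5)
This statement is Theorem 1.3 of Ra\v{i}\v{c} (2019), which the paper quotes as part of its literature survey and does not prove; there is no in-paper proof to compare against, so your proposal has to stand on its own.

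Your sketch correctly identifies the general framework of the original argument (Stein's equation solved by the Ornstein--Uhlenbeck semigroup, smoothing of $\mathbf{1}_A$ through the generalized distance $\rho_A$, the role of (A7)--(A8) in controlling derivatives of the mollified indicator, and the sandwich $A^{-\varepsilon|\rho}\subset A\subset A^{\varepsilon|\rho}$ to undo the smoothing). But the pivotal step fails as written. If you use a \emph{uniform} bound $\|\nabla^3 h\|_\infty \lesssim \varepsilon^{-1}\sqrt{1+\kappa}$ in the Lindeberg/Taylor remainder, you obtain $\delta \lesssim \varepsilon\,\gamma^*(\mathcal{A}|\rho) + \varepsilon^{-1}\sqrt{1+\kappa}\,\beta$ with $\beta=\sum_i\E|X_i|^3$, and optimizing over $\varepsilon$ gives a bound of order $\bigl(\gamma^*\sqrt{1+\kappa}\,\beta\bigr)^{1/2}$ --- the \emph{square root} of the Lyapunov sum, which is strictly weaker than the claimed linear-in-$\beta$ estimate whenever $\beta$ is small. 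This is the classical defect of naive smoothing. To reach the linear rate one must take $\varepsilon\asymp\beta$ and bound the Taylor remainder not uniformly but integrated against the law of the intermediate sums, which requires a concentration inequality of the form $\P\bigl(W\in A^{\varepsilon|\rho}\setminus A^{-\varepsilon|\rho}\bigr)\le \gauss(0,\id)\{A^{\varepsilon|\rho}\setminus A^{-\varepsilon|\rho}\}+2\delta$, where $\delta$ is the very quantity being bounded. One then solves the resulting self-referential inequality (or runs an induction on the number of summands, Bentkus-style); the case distinction producing the constant $27$ versus $1+53\gamma^*\sqrt{1+\kappa}$ comes precisely from whether this bootstrap closes. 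Your sketch contains neither the non-uniform derivative estimates nor the bootstrap, so it cannot produce the stated rate, let alone the explicit constants $53$ and $50$, which you acknowledge deferring but which are the entire content of the theorem.
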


\subsubsection{Examples of Classes of Sets satisfying (A1)-(A8) of \cref{theorem 1.3 of Raic 2019}}

\begin{itemize}

    \item[(i)] the class $\mathcal{C}_d$ of all measurable convex sets in $\R^d$, along with $\rho_A = \delta_A$, which is defined in $\mathcal{C}_d\setminus\{\phi,\R^d\}$.
    
     \item[(ii)] the class of all balls in $\R^d$ (excluding the empty set) along with $\rho_A = \delta_A$ (since the balls are convex, it meets all the assumptions (A1)-(A8)).
     
     \item[(iii)] For a class of ellipsoids, $\rho_A = \delta_A$  is not suitable because an $\varepsilon$-neighborhood of an ellipsoid is not an ellipsoid. However, one can set $\rho_A(x) = \delta_{QA}(Qx)$ , where $Q$ is a linear transformation mapping $A$ into a ball (may depend on $A$). [Note that, $Q$ must be non-expansive in order to satisfy (A7).]

\end{itemize}




\section{Level sets and bounds on $\Delta_f$}
\label{sec:bound_when_the_conditions_of_Bentkus_hold_true}

If the conditions of \Cref{theorem 1.2 of Bentkus 2005} are met, then we have

\[\sup_{A\in\A}\abs{\P\left(S_{n,{X}}\in A\right) - \P\left({Z}\in A\right)} \leq \frac{b_d(\A)}{\sqrt{n}}\frac{1}{n}\sum_{i=1}^n\E|\Sigma^{-\frac{1}{2}}X_i|^3\]
or in other words, we have, 
\[\sup_{A\in\A}\abs{\mathbb{E}f\left(S_{n,{X}}\right) - \mathbb{E}f\left({Z}\right)} \leq \frac{b_d(\A)}{\sqrt{n}}\frac{1}{n}\sum_{i=1}^n\E|\Sigma^{-\frac{1}{2}}X_i|^3\]
where, $f$ is defined as $f(x) = \mathbb{I}(x \in A)$, $\Sigma = \mathrm{Var}\left(S_{n,{X}}\right)$ and $b_d(\A)$ is as defined in \cref{theorem 1.2 of Bentkus 2005}.

\begin{theorem}
\label{lemma 4.1}
Let $f$ be a bounded Borel measurable function. For each $t\in\mathbb{R}$, define the upper level sets of $f(\cdot)$ at level $t$ as $A_t = \{x\in\mathbb{R}^d:\,f(x) \ge t\}$. Then, we have,
\begin{equation*}
    \abs{\mathbb{E}f\left(S_{n,{X}}\right) - \mathbb{E}f\left({Z}\right)} \leq 2 \norm{f}_{\infty}\sup_{t\in\mathbb{R}}\left|\mathbb{P}\left(S_{n,{X}}  \in A_t\right) - \mathbb{P}(Z \in A_t)\right|.
\end{equation*}
\end{theorem}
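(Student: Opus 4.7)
The plan is to derive the bound via the layer-cake representation of a bounded function, turning the expectation of $f$ into an integral of probabilities of its level sets, so that the assumed set-based bound can be applied uniformly.

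First I would observe that because $f$ is bounded and Borel measurable, with $M := \norm{f}_\infty$, the identity
\[
f(x) \;=\; -M \;+\; \int_{-M}^{M} \one\bigl(f(x) \ge t\bigr)\, dt \;=\; -M \;+\; \int_{-M}^{M} \one\bigl(x \in A_t\bigr)\, dt
\]
holds pointwise, since for any real $y \in [-M, M]$ we have $\int_{-M}^{M} \one(y \ge t)\, dt = y - (-M) = y + M$. The Borel measurability of $f$ ensures that each level set $A_t = \{x : f(x) \ge t\}$ is Borel, and the integrand is jointly measurable in $(x,t)$, which is needed for the next step.

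Next, I would apply Fubini's theorem (justified since the integrand is bounded and the $t$-integration is over a bounded interval) to interchange expectation and integration, both under the law of $S_{n,X}$ and under that of $Z$. Subtracting,
\[
\E f(S_{n,X}) - \E f(Z) \;=\; \int_{-M}^{M} \Bigl[ \P\bigl(S_{n,X} \in A_t\bigr) - \P\bigl(Z \in A_t\bigr)\Bigr] dt,
\]
since the constant $-M$ cancels. Taking absolute values, pulling inside the integral, and upper-bounding the integrand by the supremum over $t$ yields
\[
\abs{\E f(S_{n,X}) - \E f(Z)} \;\le\; 2M \, \sup_{t\in\R} \abs{\P\bigl(S_{n,X} \in A_t\bigr) - \P\bigl(Z \in A_t\bigr)},
\]
which is the claimed inequality.

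There is essentially no serious obstacle here; the argument is a straightforward application of the layer-cake formula and Fubini. The only things to be careful about are (a) choosing the integration range $[-M, M]$ so that the resulting prefactor is exactly $2\|f\|_\infty$ rather than something larger, and (b) ensuring measurability of the joint integrand, which is immediate from the Borel hypothesis on $f$. The statement in fact does not require any structure on the level sets $A_t$ themselves; any additional regularity (convexity, being half-spaces, etc.) will only be relevant when combining this bound with Theorem~\ref{theorem 1.2 of Bentkus 2005} or Theorem~\ref{theorem 1.3 of Raic 2019} in subsequent sections.
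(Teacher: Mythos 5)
Your proof is correct and follows essentially the same route as the paper: a layer-cake representation of $f$ via its upper level sets, an interchange of expectation and $t$-integration, and a supremum bound over the interval of length $2\norm{f}_\infty$. The only difference is cosmetic: your shifted identity $f(x) = -M + \int_{-M}^{M}\one(f(x)\ge t)\,dt$ handles general bounded $f$ in one stroke, whereas the paper splits into non-negative and negative parts and then rewrites lower-level-set probabilities as complements of upper-level-set probabilities; your version is slightly cleaner but the substance is identical.
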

\Cref{lemma 4.1} implies that for bounded functions $f$, $\Delta_f$ can be controlled using classical Berry--Esseen bounds if the level sets belong to a ``favorable'' class.
The scope of~\Cref{lemma 4.1} can be expanded significantly using the following two combinations.
\subsection{Combination 1}
Suppose $f_1,\cdots,f_k$ are bounded functions (satisfying the conditions of \cref{theorem 1.2 of Bentkus 2005}) for which the upper level sets all belong to $\A$, where $\A$ is any one of the favorable classes of convex sets or half-spaces or Euclidean balls. Then, for a function $f$ such that $f(x) = \sum_{j=1}^k \lambda_j f_j(x)$, where $\sum_{j=1}^k \abs{\lambda_j} \leq c$ for some constant $c$, by \Cref{lemma 4.1}, we have, 
\begin{equation*}
\begin{split}
   \abs{\mathbb{E}f\left(S_{n,{X}}\right) - \mathbb{E}f\left({Z}\right)} & = \abs{\sum_{j=1}^k \lambda_j \bigg\{\mathbb{E}f_j\left(S_{n,{X}}\right) - \mathbb{E}f_j\left({Z}\right)\bigg\}} \\ 
   & \leq \sum_{j=1}^k\abs{\lambda_j \bigg\{\mathbb{E}f_j\left(S_{n,{X}}\right) - \mathbb{E}f_j\left({Z}\right)\bigg\}} \\ 
   & \leq \sum_{j=1}^k\Bigg[\abs{\lambda_j}\abs{ \bigg\{\mathbb{E}f_j\left(S_{n,{X}}\right) - \mathbb{E}f_j\left({Z}\right)\Bigg\}}\bigg] \\
    & \leq \sup_j\abs{ \bigg\{\mathbb{E}f_j\left(S_{n,{X}}\right) - \mathbb{E}f_j\left({Z}\right)\bigg\}} \sum_{j=1}^k \abs{\lambda_j} \\
   & \leq c \sup_j\abs{ \bigg\{\mathbb{E}f_j\left(S_{n,{X}}\right) - \mathbb{E}f_j\left({Z}\right)\bigg\}} \\
   & \leq c . \sup_j \Bigg[2\norm{f_j}_{\infty} \sup_t \abs{\P\left(S_{n,{X}} \in A_{j,t}  \right) - \P\left(Z\in A_{j,t}\right)}\Bigg]\\
   &\qquad [\text{we got the above step using~\Cref{lemma 4.1}}] \\
   &  = 2c.\sup_j\Bigg[\norm{f_j}_{\infty} \sup_t \abs{\P\left(  \in A_{j,t}  \right) - \P\left(Z\in A_{j,t}\right)}\Bigg]\\
   & \leq 2c.\sup_j\Bigg[\norm{f_j}_{\infty}  \frac{b_d(\A)}{\sqrt{n}}\frac{1}{n} \left(\sum_{i=1}^n\E|\Sigma^{-\frac{1}{2}}X_i|^3\right)\Bigg] \\
   &\qquad [\because \text{ each } f_j \text{ has its upper level sets in the same favorable class } \A] \\
   & = \frac{2b_d(\A)c}{\sqrt{n}}  \left(\frac{1}{n}\sum_{i=1}^n\E|\Sigma^{-\frac{1}{2}}X_i|^3\right)\sup_j\norm{f_j}_{\infty}.
\end{split}
\end{equation*}
A direct application of this result can yield bounds for functions that are uniformly approximable by a class of functions whose level sets belong in the favorable class. This will be further explored in Part II.
The following result shows that one can construct an infinite class of functions with the same upper-level sets from a given function. 
\begin{theorem}
If the upper-level sets of a function $f: \R^n \longrightarrow \R$ belong in $\A$, then the upper-level sets of $g\circ f$ also belong in $\A$ for any non-decreasing function $g: \R \longrightarrow \R$. 
\end{theorem}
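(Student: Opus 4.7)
The proof plan hinges on the simple observation that because $g$ is non-decreasing, the condition $g(f(x)) \geq t$ translates to a condition of the form ``$f(x) \geq s$'' (or possibly ``$f(x) > s$'') for an appropriate threshold $s$. Concretely, fix $t \in \R$ and set $B_t := \{y \in \R : g(y) \geq t\}$. Monotonicity of $g$ forces $B_t$ to be upward-closed in $\R$, so $B_t$ must be one of $\emptyset$, $\R$, $[s_t,\infty)$, or $(s_t,\infty)$ for some $s_t \in \R$. Since
\[
\{x \in \R^n : (g \circ f)(x) \geq t\} \;=\; f^{-1}(B_t),
\]
the task reduces to checking, in each of the four cases, that $f^{-1}(B_t) \in \A$.

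The degenerate cases $B_t = \emptyset$ and $B_t = \R$ are essentially free: each of the favorable classes appearing in \Cref{theorem 1.2 of Bentkus 2005} and \Cref{theorem 1.3 of Raic 2019} either contains $\emptyset$ and $\R^n$ or treats them as trivial (cf.\ (A2) in the second theorem). The principal case is $B_t = [s_t,\infty)$, where
\[
f^{-1}(B_t) \;=\; \{x : f(x) \geq s_t\}
\]
is literally an upper level set of $f$ and therefore lies in $\A$ by hypothesis. This is the heart of the argument and uses nothing beyond the definition of $B_t$ and monotonicity of $g$.

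The only delicate case is $B_t = (s_t,\infty)$, which occurs precisely when $g$ fails to be right-continuous at $s_t$. Writing
\[
f^{-1}(B_t) \;=\; \{x : f(x) > s_t\} \;=\; \bigcup_{k=1}^{\infty} \bigl\{x : f(x) \geq s_t + 1/k\bigr\},
\]
we realize $f^{-1}(B_t)$ as a monotone countable union of upper level sets of $f$. For the concrete favorable classes cited above (half-spaces, measurable convex sets, Euclidean balls), such nested unions stay inside the class (modulo replacing closed sets by their open analogues, which are routinely included). I expect this closure-under-monotone-union point, or equivalently the need to treat strict upper level sets on equal footing with non-strict ones, to be the one (minor) obstacle in the proof; the rest is direct unpacking of definitions. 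An easy way to sidestep the issue entirely is to assume $g$ is right-continuous, which kills the $(s_t,\infty)$ case and reduces everything to the main case above.
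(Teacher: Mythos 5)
Your proposal is correct and, in fact, more careful than the paper's own argument. The paper's proof is a one-line translation: it writes $\{x : g(f(x)) \geq t\} = \{x : f(x) \geq g^{-1}(t)\} = A_{g^{-1}(t)}$, silently assuming that $g$ has a well-defined inverse and that the inequality passes through in non-strict form -- neither of which holds for a general non-decreasing $g$. You replace $g^{-1}(t)$ by the upward-closed set $B_t = \{y : g(y) \geq t\}$ and its infimum $s_t$, which is the correct way to make the paper's step rigorous, and you isolate exactly the case the paper misses: when $g$ is not right-continuous at $s_t$, the set $f^{-1}(B_t)$ is the \emph{strict} upper level set $\{x : f(x) > s_t\}$, which is not by hypothesis a member of $\A$. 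Your repair via the monotone union $\bigcup_k \{x : f(x) \geq s_t + 1/k\}$ is the right idea, with the one caveat you already flag: closure of $\A$ under increasing unions is class-dependent (it holds verbatim for the class of all convex sets; for closed half-spaces or closed balls the union is the open analogue, which is harmless for the Berry--Esseen application since probabilities pass to limits along monotone unions and the Gaussian measure of the boundary is zero, but is not literally membership in $\A$). Your suggested hypothesis that $g$ be right-continuous is the cleanest way to state the theorem so that the paper's short proof becomes fully correct; as written, the paper's proof has the gap you identified.
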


\begin{proof}
Let $A'_t$ be the upper level sets of $g$ for $t \in \R$, Then, we have, $A'_t = \{x : g(f(x)) \geq t\}$ . 
Now, since $g$ is a non-decreasing function, we have, $g(f(x)) \geq t \iff f(x) \geq g^{-1}(t)$. 
Thus, we have $A'_t = \{x : g(f(x)) \geq t\} = \{x : f(x) \geq g^{-1}(t)\} = A_{g^{-1}(t)}$, where, $A_t = \{x : f(x) \geq t\}$ denotes the upper level set of $f$ for $t \in \R$.
We know that $A_g^{-1}(t)$ belongs in $\A$ and hence $A'_t$ also belongs in $\A$.
\end{proof}

\begin{corollary}
If the upper level sets of a bounded function $f: \R^n \longrightarrow \R$ belong in $\A$, then the upper level sets of $h$ such that $h(x) = \frac{f(x)}{\norm{f}_\infty}$ also  belong in $\A$.
\end{corollary}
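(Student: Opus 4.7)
The plan is to realize the corollary as an immediate instance of the preceding theorem by writing $h$ as the composition of $f$ with a suitable non-decreasing scalar function. Specifically, I would set $g : \R \longrightarrow \R$ by $g(t) = t/\|f\|_\infty$, so that $h(x) = g(f(x)) = (g \circ f)(x)$ for every $x \in \R^n$.

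Next, I would verify that $g$ satisfies the hypothesis of the preceding theorem, namely that $g$ is non-decreasing. Since $f$ is assumed to be bounded and the statement presupposes that $h$ is well defined, we have $0 < \|f\|_\infty < \infty$, so $g$ is a linear function with strictly positive slope $1/\|f\|_\infty$, hence non-decreasing (in fact strictly increasing). The trivial case $f \equiv 0$ can be excluded since $h$ would then be undefined; alternatively, one may interpret $h$ as $0$ in that case, and note that the upper level sets are then either $\emptyset$ or $\R^n$, which can be handled separately provided $\A$ is assumed to contain these (or by a convention).

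Finally, I would invoke the preceding theorem with this choice of $g$ to conclude that the upper level sets of $h = g \circ f$ belong to $\A$. Explicitly, the upper level set of $h$ at level $t$ is
\begin{equation*}
\{x : h(x) \geq t\} = \{x : f(x) \geq t\,\|f\|_\infty\} = A_{t\|f\|_\infty},
\end{equation*}
which belongs to $\A$ by hypothesis on the level sets of $f$.

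There is essentially no obstacle: the corollary is a direct specialization of the previous theorem to the affine rescaling $g(t) = t/\|f\|_\infty$. The only mild subtlety is the degenerate case $\|f\|_\infty = 0$, which I would dispose of with a one-line remark.
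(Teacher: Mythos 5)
Your proof is correct and is exactly the argument the paper intends: the corollary is the special case of the preceding theorem with $g(t) = t/\norm{f}_\infty$, which is non-decreasing, and your direct computation $\{x : h(x) \geq t\} = A_{t\norm{f}_\infty}$ confirms it without even needing the (slightly informal) $g^{-1}$ step in the paper's proof of that theorem. Your remark on the degenerate case $\norm{f}_\infty = 0$ is a sensible extra precaution that the paper omits.
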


\subsection{Combination 2}
Suppose a function $f: \R^n \longrightarrow \R$ has an upper level set $U_t$ of the form $U_t = A_{1,t} \cup \cdots \cup A_{k,t}$, where $A_{j,t}$'s are disjoint sets each belonging to a favorable class, $\forall t \in \R$, then also we can bound $\Delta_f$.
If $f$ is a bounded function, then by~\Cref{lemma 4.1}, we have, 
\begin{equation*}
\begin{split}
    \abs{\mathbb{E}f\left(S_{n,{X}}\right) - \mathbb{E}f\left({Z}\right)} & \leq 2\norm{f}_{\infty} \sup_t \abs{\P\left(S_{n,{X}}  \in U_t \right) - \P\left(Z\in U_t\right)} \\
    & = 2\norm{f}_{\infty} \sup_t \abs{\P\left(S_{n,{X}}  \in \cup_{j=1}^{k}{A_{j,t}}  \right) - \P\left(Z\in \cup_{j=1}^{k}{A_{j,t}}\right)}\\
    & = 2\norm{f}_{\infty} \sup_t \abs{\sum_{j=1}^k\Bigg[\P\left(S_{n,{X}}  \in  A_{j,t}  \right) - \P\left(Z\in  A_{j,t}\right)\Bigg]} \\
    &\qquad [\because A_{j,t}\text{'s are disjoint}] \\
    & \leq 2\norm{f}_{\infty} \sup_t \sum_{j=1}^k \abs{\Bigg[\P\left(S_{n,{X}}  \in  A_{j,t}  \right) - \P\left(Z\in  A_{j,t}\right)\Bigg]} \\
    & \leq 2\norm{f}_{\infty}\sum_{j=1}^k \sup_t \abs{\Bigg[\P\left(S_{n,{X}}  \in  A_{j,t}  \right) - \P\left(Z\in  A_{j,t}\right)\Bigg]} \\
    & \leq 2\norm{f}_{\infty}\sum_{j=1}^k\frac{b_d(\A_j)}{\sqrt{n}}\frac{1}{n}\left(\sum_{i=1}^n\E|\Sigma^{-\frac{1}{2}}X_i|^3\right) \\
    & = \frac{2\norm{f}_{\infty}}{\sqrt{n}}\sum_{j=1}^k b_d(\A_j) \left(\frac{1}{n}\sum_{i=1}^n\E|\Sigma^{-\frac{1}{2}}X_i|^3\right)
\end{split}
\end{equation*}
\subsection{Proof of Theorem~\ref{lemma 4.1}}
\begin{proof}

To begin with, we consider a non-negative function $f$. Then, we can write, 
\begin{equation*}
\begin{split}
f(x) & = \int_{0}^{\infty} \I(f(x)\geq t)dt 
 = \int_{0}^{f(x)}dt + \int_{f(x)}^{\infty}0dt 
 = f(x)
\end{split}
\end{equation*}
Now, $\abs{f(x)} \leq \norm{f}_{\infty}$. Thus, we can now write, 
\begin{equation*}
\begin{split}
& f(x) = \int_{0}^{\norm{f}_{\infty}} \I(f(x)\geq t)dt \\
\implies & \E f(W) = \E \int_{0}^{\norm{f}_{\infty}} \I(f(W)\geq t)dt
\end{split}
\end{equation*}
By Tonelli's theorem, we know that the swapping of the expectation operator and integral is valid for non-negative summands. For the convenience of the reader, we recall Tonelli's theorem here: Suppose that  $(T,\mathcal{T},\mu)$  is a  $\sigma$-finite measure space, and that $X_t$  is a real-valued random variable for each  $t\in T$. We assume that  $(\omega,t) \mapsto X_t(\omega)$  is measurable, as a function from the product space $(\Omega \times T, \mathcal{F} \otimes \mathcal{T})$  into  $\R$. If $X_t$ is non-negative for each $t \in \R$, then, 
\begin{equation*}
    \E \int_{T} X_t d\mu(t) = \int_{T} \E (X_t) d\mu(t)
\end{equation*}
Thus, we have,   
\[\E f(W) =  \int_{0}^{\norm{f}_{\infty}}\E \I(W \geq t)dt = \int_{0}^{\norm{f}_{\infty}} \P(f(W)\geq t)dt\]
Now, we consider the expression $\abs{\mathbb{E}f\left(S_{n,{X}}\right) - \mathbb{E}f\left({Z}\right)}$. 
\begin{equation*}
\begin{split}
    \abs{\mathbb{E}f\left(S_{n,{X}}\right) - \mathbb{E}f\left({Z}\right)} & = \abs{\int_{0}^{\norm{f}_{\infty}}     \left[\P\left(S_{n,{X}}  \geq t \right) - \P\left(Z\geq t\right)\right] dt }\\
    & \leq \int_{0}^{\norm{f}_{\infty}} \abs{\P\left(S_{n,{X}}  \geq t \right) - \P\left(Z\geq t\right)} dt\\
    & = \int_{0}^{\norm{f}_{\infty}}     \abs{\P\left(S_{n,{X}}  \in A_t \right) - \P\left(Z\in A_t\right)} dt \\
     & \leq \norm{f}_{\infty} \sup_{t \in \R} \abs{\P\left(S_{n,{X}}  \in A_t \right) - \P\left(Z \in A_t\right)}
\end{split}
\end{equation*}
Now, suppose that $f$ is not non-negative, but bounded. Then, we have, 
\begin{equation*}
\begin{split}
     \mathbb{E}f\left(S_{n,{X}}\right) - \mathbb{E}f\left({Z}\right) & =  \E\Bigg[ \int_{0}^{\norm{f}_{\infty}} \I(f\left(S_{n,{X}}\right)\geq t)dt - \int_{-\norm{f}_{\infty}}^{0} \I(f\left(S_{n,{X}}\right)\leq t)dt  \Bigg] \\
     &\quad- \E\Bigg[ \int_{0}^{\norm{f}_{\infty}} \I(f(Z)\geq t)dt - \int_{-\norm{f}_{\infty}}^{0} \I(f(Z)\leq t)dt  \Bigg] 
  \end{split}   
\end{equation*}
By Tonelli's theorem, we know that the swapping of the expectation operator and integral is valid for non-negative summands. Thus, we have,  
\begin{equation*}
    \begin{split}
     \mathbb{E}f\left(S_{n,{X}}\right) - \mathbb{E}f\left({Z}\right)  & = \Bigg[\int_{0}^{\norm{f}_{\infty}} \E\I(f\left(S_{n,{X}}\right)\geq t)dt - \int_{-\norm{f}_{\infty}}^{0} \E\I(f\left(S_{n,{X}}\right)\leq t)dt\Bigg]   \\
     &\quad - \Bigg[\int_{0}^{\norm{f}_{\infty}} \E\I(f(Z)\geq t)dt - \int_{-\norm{f}_{\infty}}^{0} \E\I(f(Z)\leq t)dt\Bigg]    \\
     & = \int_{0}^{\norm{f}_{\infty}}     \left[\P\left(S_{n,{X}}  \in A_t \right) - \P\left(Z\in A_t\right)\right] dt\\
     &\quad - \int_{-\norm{f}_{\infty}}^{0}    \left[\P\left(S_{n,{X}}  \in B_t \right) - \P\left(Z\in B_t\right)\right] dt \\
    &\quad \text{where, }  B_t \text{is the lower level sets of }  f \text{ defined as } B_t = \{x \in \R^d : f(x) \leq t\}
    \end{split}
\end{equation*}

In the case of general functions $f$, which may not be non-negative, since the integral doesn't run only from $0$ to $\norm{f}_\infty$, there is an occurrence of the lower level sets $ B_t$'s. But since the probability of lying in $B_t$'s can be expressed in terms of lying in $A_t$'s, we can easily write the whole bound in terms of $A_t$'s.

\begin{equation*}
    \begin{split}
    \therefore \abs{\E f\left(S_{n,{X}}\right) - \E f\left({Z}\right)} & = \abs{\int_{0}^{\norm{f}_{\infty}}     \left[\P\left(S_{n,{X}}  \in A_t \right) - \P\left(Z\in A_t\right)\right] dt -  \int_{-\norm{f}_{\infty}}^{0}    \left[\P\left(S_{n,{X}}  \in B_t \right) - \P\left(Z\in B_t\right)\right] dt}\\
    & = \abs{\int_{0}^{\norm{f}_{\infty}}     \left[\P\left(S_{n,{X}}  \in A_t \right) - \P\left(Z\in A_t\right)\right] dt -  \int_{-\norm{f}_{\infty}}^{0}    \left[\P\left(Z\in A_t\right) - \P\left(S_{n,{X}}  \in A_t \right) \right] dt} \\
    & [\because \P(W \in B_t) = 1 - \P(W \in A_t)] \\
    & = \abs{\int_{0}^{\norm{f}_{\infty}}     \left[\P\left(S_{n,{X}}  \in A_t \right) - \P\left(Z\in A_t\right)\right] dt +  \int_{-\norm{f}_{\infty}}^{0}    \left[\P\left(S_{n,{X}}  \in A_t \right) - \P\left(Z\in A_t\right)  \right] dt}  \\
    & = \abs{\int_{-\norm{f}_{\infty}}^{\norm{f}_{\infty}}     \left[\P\left(S_{n,{X}}  \in A_t \right) - \P\left(Z\in A_t\right)\right] dt} \\
    & \leq \int_{-\norm{f}_{\infty}}^{\norm{f}_{\infty}} \abs{\P\left(S_{n,{X}}  \in A_t \right) - \P\left(Z\in A_t\right)} dt \\
    & \leq 2 \norm{f}_{\infty}\sup_{t\in\mathbb{R}}\abs{\P\left(S_{n,{X}}  \in A_t\right) - \P(Z \in A_t)}
    \end{split}
\end{equation*}
\end{proof}


\section{Applications of~\Cref{lemma 4.1}}\label{sec:applications_thm}

\subsection{Classes of functions whose upper-level sets are convex sets}

\begin{definition}
Suppose $X$ is a convex subset of $\R^d$. A function $f : X \longrightarrow R$ is \textit{quasiconcave} if
\begin{equation}\label{eq:quasi-concave-definition}
f(\alpha x + (1-\alpha) y) \geq \min\{f(x), f(y)\}  \quad \forall \alpha \in [0,1], x, y\in X.
\end{equation}
\end{definition}
It is well-known that quasiconcave functions are precisely those functions whose upper-level sets are convex. It is reproduced below for reader's convenience. 
\begin{theorem}\label{thm:quasi-concave}
A function $f : X \longrightarrow R$ is quasiconcave if and only if the upper level sets of $f$, i.e., the sets $A_t = \{x:f(x)\geq t\}$ are convex for all $t \in \R$.
\end{theorem}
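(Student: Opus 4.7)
The plan is to prove the two implications of the biconditional separately, each by a direct unpacking of the definitions; no auxiliary machinery is needed.

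For the forward direction, assume $f$ is quasiconcave, fix an arbitrary $t \in \R$, and pick any two points $x, y \in A_t$. By definition of $A_t$, both $f(x) \ge t$ and $f(y) \ge t$, so $\min\{f(x), f(y)\} \ge t$. Applying the quasiconcavity inequality \eqref{eq:quasi-concave-definition} to any $\alpha \in [0,1]$ yields $f(\alpha x + (1-\alpha) y) \ge \min\{f(x), f(y)\} \ge t$, which places $\alpha x + (1-\alpha) y$ back in $A_t$. Hence $A_t$ is convex.

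For the reverse direction, assume $A_t$ is convex for every $t \in \R$. Given $x, y \in X$ and $\alpha \in [0,1]$, set $t^\star = \min\{f(x), f(y)\}$. Then $x, y \in A_{t^\star}$, and convexity of $A_{t^\star}$ gives $\alpha x + (1-\alpha) y \in A_{t^\star}$, i.e., $f(\alpha x + (1-\alpha) y) \ge t^\star = \min\{f(x), f(y)\}$. Since $x, y, \alpha$ were arbitrary, $f$ is quasiconcave.

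I do not anticipate any real obstacle: the statement is essentially a rewriting of two equivalent formulations of the same order-theoretic condition, and the proof above uses nothing beyond the definitions. The only small points worth flagging are (i) that the domain $X$ is assumed convex in the statement, so the convex combinations in both directions stay inside $X$, and (ii) that one should note the result holds vacuously for $t$ values where $A_t$ is empty or equal to $X$, both of which are trivially convex.
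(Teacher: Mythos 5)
Your proof is correct and follows essentially the same route as the paper's: the forward direction shows $A_t$ is closed under convex combinations directly from the quasiconcavity inequality, and the reverse direction applies convexity of $A_{t^\star}$ with $t^\star = \min\{f(x),f(y)\}$, exactly as in the paper. The extra remarks about the convexity of the domain $X$ and the trivial cases are fine but not needed.
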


\begin{proof}
Suppose that $f : X \longrightarrow R$ is quasiconcave.

Fix a $t\in\mathbb{R}$. Consider the points $x,y \in A_t = \{z:f(z)\geq t\}$. Then, $f(x) \geq t$ and $f(y) \geq t$. Then by~\eqref{eq:quasi-concave-definition}, $\forall \alpha \in [0,1]$, we have,
\begin{equation*}
       f(\alpha x + (1-\alpha)y) \geq \min\{f(x),f(y)\}  \ge t.
\end{equation*}
This means that $\alpha x + (1-\alpha)y \in A_t$, thus proving that $A_t$ is convex.
Now, suppose that the upper level sets of $f$, $A_t = \{z:f(z)\geq t\}$ is convex $\forall t \in \R$. Consider two points $x,y$. Let $t' = \min\{f(x),f(y)\}$. Then, 
\begin{equation*}
    x \in A_{t'}  \text{ and } y \in A_{t'}.
\end{equation*}
Now, $A_{t'}$ is convex $\implies \alpha x + (1-\alpha)y \in A_{t'} \forall \alpha \in [0,1].$
Then, from the definition of $A_{t'}$ we have, 
\begin{equation*}
    f(\alpha x + (1-\alpha)y) \geq t' = \min\{f(x),f(y)\} 
\end{equation*}
Since, $x,y$ are arbitrary, we have proved that $f$ is quasiconcave. 
\end{proof}

\Cref{thm:quasi-concave} implies that $\Delta_f$ for quasi-concave functions bounded by $M$ can be bounded by $2M\Delta(\mathcal{C})$. 
The class of functions whose upper-level sets are half-spaces are ridge functions of the form $f(x) = \sigma(a^{\top}x - b)$ for some monotone function $\sigma(\cdot)$. These are discussed in later sections. The class of functions whose upper-level sets are Euclidean balls are radial functions, where the upper-level sets belong to the favorable class $\mathcal{B}$; these will be discussed in part II.

Until now, we have focused on only bounded functions. In the following section, we first discuss non-uniform Berry--Esseen bound and provide applications for unbounded functions whose level sets are half-spaces.
\section{Non-uniform Berry--Esseen bound and half-space level sets}\label{sec:non_uniform_BE_bnd}
To begin with, we take a look at the following bound from  \cite{shevtsova2020lower}. 
\begin{theorem}
\label{4 of Shevtsova}
Let $X_1,X_2,\cdots,X_n$ be independent random variables with distribution functions $F_1, F_2,\cdots,F_n$ and $\E X_k = 0$, $\sigma^2_k := \E X_k < \infty$,  
\begin{equation*}
    S_n := \sum_{k=1}^n X_k, B_n^2 = \sum_{k=1}^n \sigma_k^2 > 0. 
\end{equation*}
Let us denote 
\begin{equation*}
    \bar{F}_n(x) := P(S_n < xB_n), \quad \Phi(x) = \int_{-\infty}^x e^{-\frac{t^2}{2}} dt, \quad \Delta_n(x) = \abs{\bar{F}_n(x) - \Phi(x)}, \quad x \in \R. 
\end{equation*}
\begin{equation*}
    L_n(\varepsilon) := \frac{1}{B_n^2}\sum_{k=1}^n \E X_k^2 \I(\abs{X_k} > \varepsilon B_n), \quad \varepsilon > 0, \quad n \in \N. 
\end{equation*}
Then, 
\begin{equation}
    \begin{split}
        \Delta_n(x) & \le \frac{A}{(1 + \abs{x}^3)B_n}\int_{0}^{(1 + \abs{x})B_n} L_n(z) dz \\
        & = A\sum_{k=1}^n\Bigg[\frac{E X_k^2 \I(\abs{X_k} > (1+\abs{x})B_n)}{(1+\abs{x})^2B_n^2} + \frac{E X_k^2 \I(\abs{X_k} \le (1+\abs{x})B_n)}{(1+\abs{x})^3B_n^3} \Bigg]
    \end{split}
\end{equation}
where $A$ is an absolute constant. 
\end{theorem}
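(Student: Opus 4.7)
The plan is to combine a truncation argument at the scale $T=(1+|x|)B_n$ with a classical non-uniform Berry--Esseen bound of Nagaev--Bikelis type, and then to verify that the resulting estimate matches the integral of $L_n$.

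First, I would verify the displayed equality by Fubini. Using $\int_0^T \I(|X_k|>u)\,du = \min(|X_k|,T)$, the integral $\int_0^{(1+|x|)B_n} L_n(z)\,dz$ can be expressed as an explicit combination of $\E|X_k|^3\I(|X_k|\le T)$ and $T\cdot \E X_k^2\I(|X_k|>T)$ summed over $k$. After dividing by $(1+|x|^3)B_n$ and using $(1+|x|^3)\asymp(1+|x|)^3$, one recovers the two-term form on the right. Hence it suffices to bound $\Delta_n(x)$ by this sum.

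Second, I would truncate: write $X_k = X_k' + X_k''$ with $X_k' = X_k\I(|X_k|\le T) - \E X_k\I(|X_k|\le T)$, and $S_n = S_n' + S_n''$. Since $\{S_n\neq S_n'\}\subset \bigcup_k\{|X_k|>T\}$, Markov gives $P(S_n\neq S_n')\le T^{-2}\sum_k \E X_k^2\I(|X_k|>T)$, which is exactly the first summand in the claimed bound; the corresponding contribution from $\Phi(-|x|)$ is $O(e^{-x^2/2})$ and is absorbed without loss. For the truncated sum $S_n'$, whose summands are bounded by $2T$, I would apply a non-uniform Berry--Esseen of Nagaev--Bikelis type,
\begin{equation*}
|P(S_n'<xB_n)-\Phi(x)| \lesssim \frac{1}{(1+|x|)^3 B_n^3}\sum_k \E|X_k'|^3,
\end{equation*}
and use $\sum \E|X_k'|^3 \le \sum \E|X_k|^3\I(|X_k|\le T)$ up to lower-order bias terms to recover the second summand.

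The main obstacle is establishing the $(1+|x|)^{-3}$ decay in the truncated step. For moderate $|x|$, uniform Berry--Esseen via Esseen's smoothing inequality suffices. For large $|x|$, uniform Berry--Esseen is far too weak, so one needs a polynomial-in-$|x|$ decay; classically this is obtained either through Cram\'er's exponential tilt (shifting each $X_k'$ by a tilt of order $x/B_n$ and applying a local CLT under the tilted measure) or through a Bennett/Bernstein bound for $P(S_n'\ge xB_n)$ (the truncation ensures bounded summands) combined with $\Phi(-|x|)\lesssim e^{-x^2/2}$. The delicate point is matching constants so that the final bound is exactly the integral of $L_n$ rather than a looser Lyapunov-type majorant, which requires careful control of the truncation bias $\E X_k\I(|X_k|\le T)$ and of the gap between $\var(S_n')$ and $B_n^2$.
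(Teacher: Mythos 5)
This theorem is not proved in the paper at all: it is quoted (as a black box) from \cite{shevtsova2020lower}, and everything downstream of it, e.g.\ the bound in Theorem~\ref{bound for relu function}, simply invokes the displayed two-term form. So there is no internal proof to compare your sketch against; the relevant comparison is with the classical literature (Osipov's inequality, Nagaev--Bikelis), and your outline is indeed the standard route there. Two remarks on the content. First, your Fubini computation of $\int L_n$ is right and in fact exposes a typo in the paper's statement: integrating $\I(|X_k|>zB_n)$ gives $\min(|X_k|/B_n,\,1+|x|)$, so the second summand must be $\E|X_k|^3\I(|X_k|\le(1+|x|)B_n)$, not $\E X_k^2\I(\cdot)$ (the paper itself silently switches to $\E|W_k|^3$ when it uses the bound in \eqref{eq: eq 2}); the normalizations $\frac{A}{(1+|x|^3)B_n}\int_0^{(1+|x|)B_n}$ versus $\frac{A}{(1+|x|)^3}\int_0^{1+|x|}$ are likewise garbled in the statement, and your reading is the consistent one. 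Second, and more substantively: your truncation-plus-Markov step correctly produces the first summand, but the entire difficulty of the theorem lives in the step you defer, namely the $(1+|x|)^{-3}$ non-uniform bound for the truncated sum together with control of the variance deficit $B_n^2-\var(S_n')$ and the centering bias $\E X_k\I(|X_k|\le T)$. If you are permitted to cite the Nagaev--Bikelis non-uniform Berry--Esseen theorem as known, your argument closes modulo bookkeeping; if you are expected to prove that decay (via Cram\'er tilting and a local CLT, or a Bernstein bound for the bounded truncated summands), then the proposal as written is a plan rather than a proof, since that is where all the analytic work is. Given that the paper treats the result as imported, I would treat your sketch the same way: a correct identification of the standard strategy, with the core estimate left to the reference.
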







\subsection{Bounds for the ReLU and squared ReLU functions} 
\begin{theorem}
\label{bound for relu function}
Let $\sigma$ denote the ReLU function, i.e., $\sigma(x) = \max\{0,x\}$. Let $W_1, W_2, \dots , W_n$ be independent univariate random variables with $\E W_k = 0$ and $\sigma_k^2 = \E W_k^2 < \infty$, $\forall k = 1, 2, \dots, n$. Let us denote 
\begin{equation*}
    S_{n,W}\coloneqq \sum_{k=1}^n W_k, \quad B_{n,W}^2 \coloneqq \sum_{k=1}^n \sigma_k^2.
\end{equation*}
Let $Z$ be a standard normal random variable. Then, for $t\geq 0$, 
\begin{equation*}
\begin{split}
     \abs{\E\sigma(S_{n,W}-t) - \E\sigma(B_{n,W}Z-t)}   & \leq \frac{A}{B_{n,W}}\sum_{k=1}^n \E W_k^2\I(\abs{W_k} \geq t + B_{n,W})  +
     \frac{A}{2}\sum_{k=1}^n \frac{\E \abs{W_k}^3\I(\abs{W_k} < t+B_{n,W})}{(t+B_{n,W})^2} .
\end{split}
\end{equation*}
for an absolute constant $A$.
\end{theorem}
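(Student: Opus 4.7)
The plan is to reduce the claim to Shevtsova's non-uniform Berry--Esseen inequality (Theorem~\ref{4 of Shevtsova}) via a layer-cake representation of the ReLU, in the spirit of the proof of~\Cref{lemma 4.1}. Since $t\geq 0$ and $\sigma\geq 0$, Tonelli gives, for any real random variable $W$,
\[
\E\sigma(W-t)\;=\;\int_0^\infty \P(W\geq t+s)\,ds,
\]
so by the triangle inequality
\[
|\E\sigma(S_{n,W}-t)-\E\sigma(B_{n,W}Z-t)|\;\leq\;\int_0^\infty \bigl|\P(S_{n,W}\geq t+s)-\P(B_{n,W}Z\geq t+s)\bigr|\,ds.
\]

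Next I would change variables $u=(t+s)/B_{n,W}$ (so $ds=B_{n,W}\,du$ and the lower limit becomes $t/B_{n,W}$) to rewrite the right-hand side as $B_{n,W}\int_{t/B_{n,W}}^\infty \Delta_n(u)\,du$, where $\Delta_n$ is Shevtsova's non-uniform deviation; the identity $|\P(S_{n,W}\geq uB_{n,W})-\P(Z\geq u)|=\Delta_n(u)$ follows by taking complements. Since $u\geq 0$ on the integration range, $(1+|u|)=(1+u)$, so applying the two-part bound from Theorem~\ref{4 of Shevtsova} and swapping sum and integral via Fubini reduces the problem to evaluating, summand-by-summand in $k$, the two one-dimensional integrals $\int_{t/B_{n,W}}^\infty (1+u)^{-2}\I(|W_k|>(1+u)B_{n,W})\,du$ and $\int_{t/B_{n,W}}^\infty (1+u)^{-3}\I(|W_k|\leq(1+u)B_{n,W})\,du$. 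Each admits a closed form after splitting into the two regimes $|W_k|\leq t+B_{n,W}$ and $|W_k|>t+B_{n,W}$, which determine whether the indicator becomes active within the integration window or forces an altered endpoint.

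The first integral contributes nontrivially only when $|W_k|>t+B_{n,W}$ and, after using $\int_{t/B_{n,W}}^{|W_k|/B_{n,W}-1}(1+u)^{-2}du \leq B_{n,W}/(t+B_{n,W})$, delivers a term of order $\frac{1}{t+B_{n,W}}\E[W_k^2\I(|W_k|>t+B_{n,W})]$, which is dominated by the first term of the target bound since $t\geq 0$. The second integral on $\{|W_k|\leq t+B_{n,W}\}$ evaluates to $B_{n,W}^2/(2(t+B_{n,W})^2)$ times $|W_k|^3$, yielding precisely the target's second term. The main obstacle I anticipate is the residual piece on the complementary event $\{|W_k|>t+B_{n,W}\}$, which from the second integral contributes a term of the shape $\frac{1}{2}\E[|W_k|\I(|W_k|>t+B_{n,W})]$ that does not appear explicitly in the stated bound. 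The fix is the pointwise inequality $|W_k|\leq W_k^2/(t+B_{n,W})\leq W_k^2/B_{n,W}$, valid on this event, which allows the residual to be absorbed into the first term of the target at the cost of enlarging the absolute constant.
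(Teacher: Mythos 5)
Your proposal is correct and follows essentially the same route as the paper: a layer-cake representation of the ReLU, a change of variables reducing the difference to $B_{n,W}\int_{t/B_{n,W}}^{\infty}\Delta_n(u)\,du$, Shevtsova's non-uniform bound, and explicit evaluation of the two resulting integrals split over the events $\{|W_k|\le t+B_{n,W}\}$ and its complement. The only (immaterial) difference is in the residual term $\tfrac12\E|W_k|\I(|W_k|>t+B_{n,W})$: you absorb it into the first term via $|W_k|\le W_k^2/B_{n,W}$ at the cost of enlarging the absolute constant, whereas the paper cancels it against the exactly computed negative contribution of the first integral and discards the leftover negative term.
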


\begin{proof}

For $t \geq 0$,

\begin{equation*}
    \begin{split}
    &\abs{\E\sigma\left(\frac{S_{n,W}}{B_{n,W}} - \frac{t}{B_{n,W}}\right) - \E\sigma\left(Z- \frac{t}{B_{n,W}}\right)}\\ 
    & = \abs{\int_{0}^{\infty}\left[\P\left(\sigma\left(\frac{S_{n,W}}{B_{n,W}} - \frac{t}{B_{n,W}}\right) \geq w \right) - \P\left(\sigma\left(Z- \frac{t}{B_{n,W}}\right) \geq w \right) \right]dw} \\
     & = \abs{\int_{0}^{\infty}\left[\P\left(\sigma\left(Z- \frac{t}{B_{n,W}}\right) < w \right) - \P\left(\sigma\left(\frac{S_{n,W}}{B_{n,W}} - \frac{t}{B_{n,W}}\right) < w \right) \right]dw} \\
     & \leq  \int_{0}^{\infty}\abs{\P\left(\sigma\left(Z- \frac{t}{B_{n,W}}\right) < w \right) - \P\left(\sigma\left(\frac{S_{n,W}}{B_{n,W}} - \frac{t}{B_{n,W}}\right) < w \right)} dw  \\
     & = \int_{0}^{\infty}\Bigg |[\P\left(\max\Bigg\{0,Z-\frac{t}{B_{n,W}}\Bigg\} < w \right) -  \P\left(\max\Bigg\{0,\frac{S_{n,W}}{B_{n,W}} - \frac{t}{B_{n,W}}\Bigg\} < w \right) \Bigg|  dw.
    \end{split}
\end{equation*}
(Since we are considering the integral from  $0$ to $\infty$,  so we have $w > 0$.) So, for a random variable $X, w > \max\{0,X\}$ means that $w > 0$ and $w > X$. But because of the lower limit of the integral, we already have $w > 0$. So, the event $w > 0$ and $w > X$ boils down to the event $w > X$.
\begin{equation}
    \begin{split}
        &\abs{\E\sigma\left(\frac{S_{n,W}}{B_{n,W}} - \frac{t}{B_{n,W}}\right) - \E\sigma\left(Z- \frac{t}{B_{n,W}}\right)}\\ 
        & \leq \int_{0}^{\infty}\abs{\left[\P\left(Z - \frac{t}{B_{n,W}} < w \right) - \P\left(\frac{S_n}{B_n} - \frac{t}{B_{n,W}} < w \right) \right]dw} \\
     & = \int_{0}^{\infty}\abs{\left[\P\left(Z < \frac{t}{B_{n,W}} + w \right) - \P\left(\frac{S_n}{B_n} < \frac{t}{B_{n,W}} +  w \right) \right]dw} \\
     & = \int_{0}^{\infty} \Delta_n\left(w + \frac{t}{B_{n,W}}\right) dw \\
     & = \int_{{t}/{B_{n,W}}}^{\infty} \Delta_n (s) ds \quad \Bigg[\text{by the change of variable } s = w + \frac{t}{B_{n,W}}\Bigg] 
    \end{split}
\end{equation} 
By~\Cref{4 of Shevtsova}, we know that, 
\begin{equation}
\label{eq: eq 2}
    \Delta_n(s) \leq A \sum_{k=1}^n \left[\frac{\E W_k^2 \I(\abs{W_k} > (1+\abs{s})B_{n,W})}{(1+\abs{s})^2B_{n,W}^2} + \frac{\E \abs{W_k}^3 \I(\abs{W_k} \leq (1+\abs{s})B_{n,W})}{(1+\abs{s})^3B_{n,W}^3} \right]
\end{equation}
for an absolute constant $A$.
Then, we have
\begin{equation}\label{eq:eq_3}
    \begin{split}
     &\abs{\E\sigma\left(\frac{S_{n,W}}{B_{n,W}} - \frac{t}{B_{n,W}}\right) - \E\sigma\left(Z- \frac{t}{B_{n,W}}\right)}\\ 
     & \leq \int_{0}^{\infty} \Delta_n(s) ds\\
     & = A \sum_{k=1}^n  \int_{\frac{t}{B_{n,W}}}^{\infty} \Bigg[\frac{\E W_k^2 \I(\abs{W_k} > (1+\abs{s})B_{n,W}}{(1+\abs{s})^2B_{n,W}^2} +  \frac{\E \abs{W_k}^3 \I(\abs{W_k} \leq (1+\abs{s})B_{n,W})}{(1+\abs{s})^3B_{n,W}^3} \Bigg] ds \\
     & = A \sum_{k=1}^n  \int_{\frac{t}{B_{n,W}}}^{\infty} \Bigg[\frac{\E W_k^2 \I(\abs{W_k} > (1+s)B_{n,W}}{(1+s)^2B_{n,W}^2} +  \frac{\E \abs{W_k}^3 \I(\abs{W_k} \leq (1+s)B_{n,W})}{(1+s)^3B_{n,W}^3} \Bigg] ds.
    \end{split}  
\end{equation}
Now, the first term on the right hand side can be simplified as follows:
\begin{equation}
\label{eq:eq_4}
     \begin{split}
     &\int_{\frac{t}{B_{n,W}}}^{\infty}\frac{\E W_k^2 \I(\abs{W_k} > (1+s)B_{n,W})}{(1+s)^2B_{n,W}^2} ds\\ 
     & = \E\int_{\frac{t}{B_{n,W}}}^{\infty}\frac{W_k^2 \I(\abs{W_k} > (1+s)B_{n,W})}{(1+s)^2B_{n,W}^2} ds\\
     & = \frac{1}{B_{n,W}^2}\E W_k^2 \I\left(\abs{W_k}B_{n,W}^{-1}-1 \geq \frac{t}{B_{n,W}}\right) \int_{\frac{t}{B_{n,W}}}^{\abs{W_k}B_{n,W}^{-1}-1}\frac{1}{(1+s)^2} ds \\
     & = \frac{1}{B_{n,W}^2}\E W_k^2\I(\abs{W_k} \geq t + B_{n,W})  \int_{\frac{t}{B_{n,W}}}^{\abs{W_k}B_{n,W}^{-1}-1}\frac{1}{(1+s)^2} ds \\
     & = \frac{1}{B_{n,W}^2}\E W_k^2 \I(\abs{W_k} \geq t + B_{n,W}) \Bigg[-\frac{1}{1+s}\Bigg]_0^{\abs{W_k}B_{n,W}^{-1}-1}\\
     & = \frac{1}{B_{n,W}^2}\E W_k^2 \I(\abs{W_k} \geq t + B_{n,W}) \Bigg[1 - \frac{1}{1 + (\abs{W_k}B_{n,W}^{-1}-1)}\Bigg]\\
     & = \frac{1}{B_{n,W}^2}\E W_k^2 \I(\abs{W_k} \geq t + B_{n,W}) \Bigg[1 - \frac{B_{n,W}}{\abs{W_k}}\Bigg] \\
     & = \frac{1}{B_{n,W}^2}\E W_k^2\I(\abs{W_k} \geq t + B_{n,W})  - \frac{1}{B_{n,W}}\E \abs{W_k}\I(\abs{W_k} \geq t + B_{n,W}). 
    \end{split}  
\end{equation}
The second term on the right-hand side of the bound on $|\mathbb{E}\sigma(S_{n,W}/B_{n,W} - t/B_{n,W}) - \mathbb{E}\sigma(Z - t/B_{n,W})|$ can be simplified as follows:
\begin{equation}\label{eq:eq_5}
     \begin{split}
     &\int_{\frac{t}{B_{n,W}}}^{\infty}\frac{\E \abs{W_k}^3 \I(\abs{W_k} \leq (1+s)B_{n,W})}{(1+s)^3B_{n,W}^3} ds\\ 
     & = \E\int_{\frac{t}{B_{n,W}}}^{\infty}\frac{\abs{W_k}^3 \I(\abs{W_k} \leq (1+s)B_{n,W})}{(1+s)^3B_{n,W}^3} ds\\
     & = \frac{1}{B_{n,W}^3}\E \abs{W_k}^3 \int_{\max\{tB_{n,W}^{-1},\abs{W_k}B_{n,W}^{-1}-1\}}^\infty \frac{1}{(1+s)^3} ds \\
     & =  \frac{1}{B_{n,W}^3}\E \abs{W_k}^3 \Bigg[-\frac{1}{2(1+s)^2}\Bigg]_{\max\{tB_{n,W}^{-1},\abs{W_k}B_{n,W}^{-1}-1\}}^\infty \\
     & = \frac{1}{2B_{n,W}^3}\E \abs{W_k}^3 \frac{1}{(1+\abs{W_k}B_{n,W}^{-1}-1)^2}\I(\abs{W_k}B_{n,W}^{-1}-1\geq tB_{n,W}^{-1})\\
     &\qquad+ \frac{1}{2B_{n,W}^3}\E \abs{W_k}^3 \frac{1}{(1+tB_{n,W}^{-1})^2}\I(\abs{W_k}B_{n,W}^{-1}-1< tB_{n,W}^{-1}) \\
     & = \frac{1}{2B_{n,W}}\E \abs{W_k}\I(\abs{W_k} \geq t + B_{n,W}) + \frac{1}{2B_{n,W}}\E \abs{W_k}^3 \frac{1}{(t+B_{n,W})^2}\I(\abs{W_k} < t+B_{n,W})
    \end{split}  
\end{equation}
Combining these inequalities, we have 
\begin{equation}
\label{eq: eq 6}
    \begin{split}
     &\abs{\E\sigma\left(\frac{S_{n,W}}{B_{n,W}} - \frac{t}{B_{n,W}}\right) - \E\sigma\left(Z- \frac{t}{B_{n,W}}\right)}\\ 
     & \leq A \sum_{k=1}^n  \int_{tB_{n,W}^{-1}}^{\infty} \Bigg[\frac{\E W_k^2 \I(\abs{W_k} > (1+s)B_{n,W}}{(1+s)^2B_{n,W}^2} +  \frac{\E \abs{W_k}^3 \I(\abs{W_k} \leq (1+s)B_{n,W})}{(1+s)^3B_{n,W}^3} \Bigg] ds \\
     & \leq \frac{A}{B_{n,W}^2}\sum_{k=1}^n \E W_k^2\I(\abs{W_k} \geq t + B_{n,W})  -   \frac{A}{2B_{n,W}}\sum_{k=1}^n\E \abs{W_k}\I(\abs{W_k} \geq t + B_{n,W}) + \\
     &\quad\qquad \frac{A}{2B_{n,W}}\sum_{k=1}^n \frac{\E \abs{W_k}^3\I(\abs{W_k} < t+B_{n,W})}{(t+B_{n,W})^2} \\
     &\leq \frac{A}{B_{n,W}^2}\sum_{k=1}^n \E W_k^2\I(\abs{W_k} \geq t + B_{n,W}) + 
     \frac{A}{2B_{n,W}}\sum_{k=1}^n \frac{\E \abs{W_k}^3
\I(\abs{W_k} < t+B_{n,W})}{(t+B_{n,W})^2} 
    \end{split}  
\end{equation}

Finally, using the fact that $\sigma(\cdot)$ is a positively homogeneous function (i.e., $\sigma(cx) = c\sigma(x)$ for all $c > 0$), we obtain
\begin{equation}
    \label{eq: eq 7}
    \begin{split}
     &\abs{\E\sigma(S_{n,W}-t) - \E\sigma(B_{n,W}Z-t)}\\ 
     & = \abs{\E\max\{0,S_{n,W}-t\} - \E\max\{0,B_{n,W}Z-t\}} \\
     & = \abs{\E B_{n,W}\max\Bigg\{0,\frac{S_{n,W}}{B_{n,W}} - \frac{t}{B_{n,W}}\Bigg\} - \E B_{n,W}\max\Bigg\{0,\left(Z- \frac{t}{B_{n,W}}\right)\Bigg\}} \\
     & = B_{n,W}\abs{\E\sigma\left(\frac{S_{n,W}}{B_{n,W}} - \frac{t}{B_{n,W}}\right) - \E\sigma\left(Z- \frac{t}{B_{n,W}}\right)} \\
      &  \leq \frac{A}{B_{n,W}}\sum_{k=1}^n \E W_k^2\I(\abs{W_k} \geq t + B_{n,W})  +   
     \frac{A}{2}\sum_{k=1}^n \frac{\E \abs{W_k}^3\I(\abs{W_k} < t+B_{n,W})}{(t+B_{n,W})^2}  \quad [\text{using }\cref{eq: eq 6}]
    \end{split}
\end{equation}
This concludes the proof. 
\end{proof}

\begin{theorem}
\label{bound for squared relu function}
Let $\sigma(\cdot)$ denote the ReLU function, i.e., $\sigma(x) = \max\{0,x\}$. Let $W_1, W_2, \dots , W_n$ be independent univariate random variables 
such that $\E W_k = 0$ and $\sigma_k^2 = \E W_k^2 < \infty$, $\forall k = 1, 2, \dots, n$. Let 
\begin{equation*}
    S_{n,W}\coloneqq \sum_{k=1}^n W_k, \quad B_{n,W}^2 \coloneqq \sum_{k=1}^n \sigma_k^2.
\end{equation*}
Let $Z$ be a standard normal random variable. Then, for $t\geq0$, 
\begin{equation*}
\begin{split}
      \abs{\E\sigma^2(S_{n,W}-t) - \E\sigma^2(B_{n,W}Z-t)}  &\leq  2A\sum_{k=1}^n \E W_k^2\I(\abs{W_k} \geq t + B_{n,W}) \ln(\abs{W_k})  \\
      &\qquad+ 2A\left[1+\ln(1 + tB_{n,W}^{-1})\right] \sum_{k=1}^n \E W_k^2\I(\abs{W_k} \geq t + B_{n,W})\\  
      &\qquad+ 2 \frac{A}{B_{n,W}(1+tB_{n,W}^{-1})} \sum_{k=1}^n \E \abs{W_k}^3 \I(\abs{W_k} < t + B_{n,W})
      \end{split}
\end{equation*}
for an absolute constant $A$.
\end{theorem}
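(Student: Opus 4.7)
The plan mirrors the proof of~\Cref{bound for relu function} with two key adjustments that together account for the appearance of logarithmic factors in the final bound. First, I would exploit that $\sigma^2$ is positively homogeneous of degree $2$, writing, with $a = t/B_{n,W} \geq 0$,
\[
\abs{\E\sigma^2(S_{n,W}-t) - \E\sigma^2(B_{n,W}Z-t)} = B_{n,W}^2 \abs{\E\sigma^2\!\left(\tfrac{S_{n,W}}{B_{n,W}} - a\right) - \E\sigma^2(Z-a)},
\]
so that all subsequent analysis is carried out at the normalized scale. Second, the layer-cake identity becomes quadratic: $\sigma^2(y) = \int_0^\infty 2u\,\I(y\geq u)\,du$ for every $y\in\R$ (both sides vanish on $\{y\leq 0\}$). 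Inserting this into $\E\sigma^2(\cdot - a)$ for both $S_{n,W}/B_{n,W}$ and $Z$, applying Tonelli's theorem exactly as in the proof of~\Cref{lemma 4.1}, relating $\abs{\P(Y\geq r) - \P(Y'\geq r)}$ to $\Delta_n(r)$ by complementation, and changing variables via $s = u+a$ would yield
\[
\abs{\E\sigma^2\!\left(\tfrac{S_{n,W}}{B_{n,W}} - a\right) - \E\sigma^2(Z-a)} \leq \int_a^\infty 2(s-a)\,\Delta_n(s)\,ds.
\]

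I would then plug in the non-uniform Berry--Esseen bound from~\Cref{4 of Shevtsova} and use Fubini on each of the two resulting summands. The first summand contributes, for each $k$,
\[
\frac{2}{B_{n,W}^2}\,\E\!\left[W_k^2\,\I(\abs{W_k}\geq t+B_{n,W}) \int_{a}^{\abs{W_k}/B_{n,W}-1} \frac{s-a}{(1+s)^2}\,ds\right],
\]
where the indicator in Shevtsova's bound cuts the $s$-integral at $\abs{W_k}/B_{n,W}-1$, which exceeds $a$ only when $\abs{W_k}\geq t+B_{n,W}$. A primitive computation (substituting $u = 1+s$) gives
\[
\int_{a}^{b}\frac{s-a}{(1+s)^2}\,ds = \ln\!\left(\frac{1+b}{1+a}\right) + \frac{1+a}{1+b} - 1 \leq \ln\!\left(\frac{\abs{W_k}}{t+B_{n,W}}\right),
\]
which is precisely the source of the $\ln\abs{W_k}$ factor in the stated bound; decomposing this logarithm and absorbing the $B_{n,W}$-dependent piece into the $W_k$-free constant $1+\ln(1+tB_{n,W}^{-1})$ produces the first two terms of the statement after multiplying through by $B_{n,W}^2$ from the homogeneity step. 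The second summand, carrying $\I(\abs{W_k}\leq(1+s)B_{n,W})$, splits on whether $\abs{W_k}\leq t+B_{n,W}$ or not; in the first case one obtains exactly $\int_a^\infty(s-a)/(1+s)^3\,ds = 1/(2(1+a)) = B_{n,W}/(2(t+B_{n,W}))$, yielding the third term of the claimed bound. In the complementary case the tail contribution decays as $B_{n,W}/\abs{W_k}$ and is absorbed into a $\E W_k^2\I(\abs{W_k}\geq t+B_{n,W})$-type term via $\abs{W_k}^3/(t+B_{n,W})^2 \cdot B_{n,W}/\abs{W_k} \leq W_k^2$.

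The main obstacle will be the bookkeeping for the first summand: the exact integral yields $\ln(\abs{W_k}/(t+B_{n,W}))$, and to arrive at the clean decomposition $\ln\abs{W_k} + [1+\ln(1+tB_{n,W}^{-1})]$ used in the statement one must perform a slightly lossy rearrangement that pushes all $B_{n,W}$-dependence into the additive constant, and one must also verify that the residual contribution from the second summand's large-$\abs{W_k}$ case indeed folds into this constant. Apart from this rearrangement, every step is a routine adaptation of the argument for the ReLU case in~\Cref{bound for relu function}, with positive $2$-homogeneity and a $2u$-weighted layer-cake replacing $1$-homogeneity and the flat layer-cake used there.
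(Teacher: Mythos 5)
Your proposal is correct and follows essentially the same route as the paper: reduce to the normalized scale by $2$-homogeneity, apply a quadratic layer-cake/change of variables to reach $\int_{t/B_{n,W}}^{\infty} 2(s-t/B_{n,W})\,\Delta_n(s)\,ds$, insert the non-uniform bound of \Cref{4 of Shevtsova}, and evaluate the two resulting integrals with the indicator cutoffs at $\abs{W_k}/B_{n,W}-1$. The only cosmetic difference is that you integrate the exact weight $(s-a)/(1+s)^2$ rather than first bounding $s-a\leq s$ and then $s/(1+s)^2\leq 1/(1+s)$ as the paper does, which gives a marginally tighter intermediate expression before the same lossy rearrangement into the stated form.
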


\begin{proof}
For $t \geq 0$,
\begin{equation*}
    \begin{split}
    &\abs{\E\sigma^2\left(\frac{S_{n,W}}{B_{n,W}} - \frac{t}{B_{n,W}}\right) - \E\sigma^2\left(Z- \frac{t}{B_{n,W}}\right)}\\ & = \abs{\int_{0}^{\infty}\left[\P\left(\sigma^2\left(\frac{S_{n,W}}{B_{n,W}} - \frac{t}{B_{n,W}}\right) \geq w \right) - \P\left(\sigma^2\left(Z- \frac{t}{B_{n,W}}\right) \geq w \right) \right]dw} \\
     & = \abs{\int_{0}^{\infty}\left[\P\left(\sigma^2\left(Z- \frac{t}{B_{n,W}}\right) < w \right) - \P\left(\sigma^2\left(\frac{S_{n,W}}{B_{n,W}} - \frac{t}{B_{n,W}}\right) < w \right) \right]dw} \\
     & \leq  \int_{0}^{\infty}\abs{\left[\P\left(\sigma^2\left(Z- \frac{t}{B_{n,W}}\right) < w \right) - \P\left(\sigma^2\left(\frac{S_{n,W}}{B_{n,W}} - \frac{t}{B_{n,W}}\right) < w \right) \right]dw}  \\
     & = \int_{0}^{\infty}\Bigg |\Bigg[\P\left(\max\Bigg\{0,Z-\frac{t}{B_{n,W}}\Bigg\}^2 < w \right) - \P\left(\max\Bigg\{0,\frac{S_{n,W}}{B_{n,W}} - \frac{t}{B_{n,W}}\Bigg\}^2 < w \right) \Bigg]dw \Bigg| 
    \end{split}
\end{equation*}

Since we are considering the integral from  $0$ to $\infty$,  so we have $w > 0$. So, for a random variable $X$, $\sqrt{w} > \max\{0,X\}$ means that $w > 0$ and $\sqrt{w} > X$. But because of the lower limit of the integral, we already have $w > 0$. So, the event $w > 0$ and $\sqrt{w} > X$ boils down to the event $\sqrt{w} > X$.

\begin{equation}
\label{eq: eq 8}
    \begin{split}
        &\abs{\E\sigma^2\left(\frac{S_{n,W}}{B_{n,W}} - \frac{t}{B_{n,W}}\right) - \E\sigma^2\left(Z- \frac{t}{B_{n,W}}\right)}\\ 
        & \leq 
         \int_{0}^{\infty}\Bigg|\Bigg[\P\left(Z < \frac{t}{B_{n,W}} + \sqrt{w} \right) - \P\left(\frac{S_n}{B_n} < \frac{t}{B_{n,W}} +  \sqrt{w} \right)\Bigg]dw\Bigg|  \\
         & = \int_{0}^{\infty}  \Delta_n\left(\frac{t}{B_{n,W}} + \sqrt{w}\right) dw  \\
         & = 2 \int_{t/B_{n,W}}^{\infty} \Delta_n(s)\Bigg(s - \frac{t}{B_{n,W}}\Bigg)ds \\
         & [\text{by using the change of variables } s = \frac{t}{B_{n,W}} + \sqrt{w}]  \\ 
         & = 2 \int_{{t}/{B_{n,w}}}^{\infty} s\Delta_n(s)ds - 2\frac{t}{B_{n,W}}\int_{{t}/{B_{n,w}}}^{\infty} \Delta_n(s)ds \\
         & \leq 2 \int_{{t}/{B_{n,w}}}^{\infty} s\Delta_n(s)ds \quad [\because \Delta_n(s) > 0,  \forall s, t \geq 0, B_{n,W}> 0]
    \end{split}
\end{equation}

By \Cref{4 of Shevtsova}, we know that, 
\begin{equation}
\label{eq: eq 9}
    \Delta_n(s) \leq A \sum_{k=1}^n \left[\frac{\E W_k^2 \I(\abs{W_k} > (1+\abs{s})B_{n,W})}{(1+\abs{s})^2B_{n,W}^2} + \frac{\E \abs{W_k}^3 \I(\abs{W_k} \leq (1+\abs{s})B_{n,W})}{(1+\abs{s})^3B_{n,W}^3} \right]
\end{equation}
for an absolute constant $A$.
We can see that the bound of $\Delta_n(s)$ given in \cref{eq: eq 9} is an even function of $s$. So, continuing from \cref{eq: eq 8}, we can write 
{\small\begin{equation}
\label{eq: eq 10}
    \begin{split}
     2 \int_{tB_{n,w}^{-1}}^{\infty} s\Delta_n(s)ds  \leq 2 \int_{tB_{n,w}^{-1}}^{\infty} s A \sum_{k=1}^n \Bigg[\frac{\E W_k^2 \I(\abs{W_k} > (1+\abs{s})B_{n,W})}{(1+\abs{s})^2B_{n,W}^2} + \frac{\E \abs{W_k}^3 \I(\abs{W_k} \leq (1+\abs{s})B_{n,W})}{(1+\abs{s})^3B_{n,W}^3} \Bigg] ds \\
     = 2 A \sum_{k=1}^n \E\int_{tB_{n,w}^{-1}}^{\infty} s  \Bigg[\frac{W_k^2 \I(\abs{W_k} > (1+s)B_{n,W})}{(1+s)^2B_{n,W}^2} + \frac{\abs{W_k}^3 \I(\abs{W_k} \leq (1+s)B_{n,W})}{(1+s)^3B_{n,W}^3} \Bigg] ds \\ 
     = 2 \frac{A}{B_{n,W}^2} \sum_{k=1}^n \E W_k^2\I(\abs{W_k} \geq t + B_{n,W})\int_{tB_{n,w}^{-1}}^{\abs{W_k}B_{n,w}^{-1}-1} \frac{s}{(1+s)^2}ds + 2 \frac{A}{B_{n,W}^3} \sum_{k=1}^n \E \abs{W_k}^3\int_{\max\{tB_{n,w}^{-1},\abs{W_k}B_{n,w}^{-1}-1\}}^{\infty} \frac{s}{(1+s)^3}ds \\
     \leq 2 \frac{A}{B_{n,W}^2} \sum_{k=1}^n \E W_k^2\I(\abs{W_k} \geq t + B_{n,W}) \int_{tB_{n,w}^{-1}}^{\abs{W_k}B_{n,w}^{-1}-1} \frac{1}{(1+s)}ds + 2 \frac{A}{B_{n,W}^3} \sum_{k=1}^n \E \abs{W_k}^3\int_{\max\{tB_{n,w}^{-1},\abs{W_k}B_{n,w}^{-1}-1\}}^{\infty} \frac{1}{(1+s)^2}ds\\
     = 2 \frac{A}{B_{n,W}^2} \sum_{k=1}^n \E W_k^2\I(\abs{W_k} \geq t + B_{n,W}) [\ln(1+s)]_{tB_{n,w}^{-1}}^{\abs{W_k}B_{n,w}^{-1}-1} + 2 \frac{A}{B_{n,W}^3} \sum_{k=1}^n \E \abs{W_k}^3\left[-\frac{1}{1+s}\right]_{\max\{tB_{n,w}^{-1},\abs{W_k}B_{n,w}^{-1}-1\}}^{\infty}  \\
     = 2 \frac{A}{B_{n,W}^2} \sum_{k=1}^n \E W_k^2\I(\abs{W_k} \geq t + B_{n,W}) \ln(\abs{W_k}) - 2 \frac{A\ln(B_{n,W})}{B_{n,W}^2} \sum_{k=1}^n \E W_k^2\I(\abs{W_k} \geq t + B_{n,W}) + \\
     2 \frac{A\ln(1 + tB_{n,W}^{-1})}{B_{n,W}^2} \sum_{k=1}^n \E W_k^2\I(\abs{W_k} \geq t + B_{n,W})  + 2 \frac{A}{B_{n,W}^3(1+tB_{n,W}^{-1})} \sum_{k=1}^n \E \abs{W_k}^3 \I(\abs{W_k} < t + B_{n,W}) + \\
     2 \frac{A}{B_{n,W}^2} \sum_{k=1}^n \E W_k^2 \I(\abs{W_k} \geq t + B_{n,W}) \\
     \leq 2 \frac{A}{B_{n,W}^2} \sum_{k=1}^n \E W_k^2\I(\abs{W_k} \geq t + B_{n,W}) \ln(\abs{W_k}) + 2 \frac{A\ln(1 + tB_{n,W}^{-1})}{B_{n,W}^2} \sum_{k=1}^n \E W_k^2\I(\abs{W_k} \geq t + B_{n,W})  + \\
     2 \frac{A}{B_{n,W}^3(1+tB_{n,W}^{-1})} \sum_{k=1}^n \E \abs{W_k}^3 \I(\abs{W_k} < t + B_{n,W}) + 2 \frac{A}{B_{n,W}^2} \sum_{k=1}^n \E W_k^2 \I(\abs{W_k} \geq t + B_{n,W})
    \end{split}
\end{equation}}

Finally, using the fact that $\sigma^2(cx) = c^2\sigma(x)$ for all $c > 0$), we obtain 
\begin{equation}
    \label{eq: eq 11}
    \begin{split}
     \abs{\E\sigma^2(S_{n,W}-t) - \E\sigma^2(B_{n,W}Z-t)}   = \abs{\E\max\{0,S_{n,W}-t\}^2 - \E\max\{0,B_{n,W}Z-t\}^2} \\
     = \abs{\E B_{n,W}^2\max\Bigg\{0,\frac{S_{n,W}}{B_{n,W}} - \frac{t}{B_{n,W}}\Bigg\}^2 - \E B_{n,W}^2\max\Bigg\{0,\left(Z- \frac{t}{B_{n,W}}\right)\Bigg\}^2} \\
     = B_{n,W}^2\abs{\E\sigma^2\left(\frac{S_{n,W}}{B_{n,W}} - \frac{t}{B_{n,W}}\right) - \E\sigma^2\left(Z- \frac{t}{B_{n,W}}\right)} \\
     \leq  2A\sum_{k=1}^n \E W_k^2\I(\abs{W_k} \geq t + B_{n,W}) \ln(\abs{W_k}) + 2A\ln(1 + tB_{n,W}^{-1}) \sum_{k=1}^n \E W_k^2\I(\abs{W_k} \geq t + B_{n,W})  + \\
     2 \frac{A}{B_{n,W}(1+tB_{n,W}^{-1})} \sum_{k=1}^n \E \abs{W_k}^3 \I(\abs{W_k} < t + B_{n,W}) + 2A\sum_{k=1}^n \E W_k^2 \I(\abs{W_k} \geq t + B_{n,W}) \\
     = 2A\sum_{k=1}^n \E W_k^2\I(\abs{W_k} \geq t + B_{n,W}) \ln(\abs{W_k}) + 2A\left[1+\ln(1 + tB_{n,W}^{-1})\right] \sum_{k=1}^n \E W_k^2\I(\abs{W_k} \geq t + B_{n,W})  + \\
     2 \frac{A}{B_{n,W}(1+tB_{n,W}^{-1})} \sum_{k=1}^n \E \abs{W_k}^3 \I(\abs{W_k} < t + B_{n,W})
    \end{split}
\end{equation}
This concludes the proof.
\end{proof}

\subsection{Bounds for single-layer neural networks}
\paragraph{Motivation.} In the sections that follow, we derive bounds for functions that have an integral representation of the forms given in \cite{klusowski2018approximation}. We provide the statements and proofs of the existence results and bounds for functions with bounded $l_1$ norm of inner parameters. As mentioned in \cref{sec: Problem Statement}, our main goal is to find bounds for $\abs{Ef(S_{n,X}) - Ef(Z)}$ for arbitrary functions $f$ and neural networks are one of the most prominent classes of functions that are dense. The single layer neural networks that we consider are dense in $L_2$ and can be used to approximate any continuous function. 
\begin{theorem}
\label{bound for functions which have an integral representation of the form given in Theorem 1 of klusowski2018approximation}
Suppose $X_1, \dots, X_n$ is a sequence of mean zero independent $d$-dimensional random vectors. Set $S_{n,X} = n^{-1/2}\sum_{i=1}^n X_i$. Let $Z$ be a $d$-dimensional Gaussian random vector with mean zero and variance-covariance matrix given by $\Sigma = \mbox{Var}(S_{n,X})$. 
Let $f$ be a function that admits an integral representation of the form 
\begin{equation}
\label{eq: eq 9.1}
    f(x) = v\int_{[0,1]\times\{a:\norm{a}_1=1\}}\eta(t,a)(a^\top x - t)^{s-1}_+dP(t,a),
\end{equation}
for $x \in D = [-1,1]^d$ and $s \in \{2,3\}$, where $P$ is a probability measure on $[0,1]\times\{a:\norm{a}_1=1\}$ and $\eta(t,a)$ is either $-1$ or $+1$ for all $t, a$.
Then there exists an universal constant $A > 0$ such that
\begin{itemize}
    \item if $s = 2$, then
    \begin{equation*}
    \begin{split}
    \abs{\E f(S_{n,X}) - \E f(Z)} & \leq \frac{A\abs{v}}{n}\Bigg\{\sup_{a:\norm{a}_1=1}\left[\frac{1}{\norm{a}_\Sigma}\sum_{k=1}^n \E  (a^\top X_k)^2 \I(\abs{a^\top X_k} \geq \sqrt{n}\norm{a}_\Sigma)\right] + \\ 
      & \sup_{a:\norm{a}_1=1}\left[\frac{1}{2\norm{a}_\Sigma^2}\sum_{k=1}^n\E \frac{\abs{a^\top X_k}^3}{\sqrt{n}}\I(\abs{a^\top X_k} <  \sqrt{n}\norm{a}_\Sigma)\right]\Bigg\},
      \end{split}
    \end{equation*}
    and
    \item if $s = 3$, then
    \begin{equation*}
        \begin{split}
          \abs{\E f(S_{n,X}) - \E f(Z)} & \leq 2\frac{A\abs{v}}{n}\Bigg\{\sup_{a:\norm{a}_1=1}\sum_{k=1}^n \E \left[(a^\top X_k)^2 \ln\left(\frac{e\abs{a^\top X_k}}{\norm{a}_\Sigma}\right)\right]\I(\abs{a^\top X_k} \geq \sqrt{n}\norm{a}_\Sigma) + \\
      & \sup_{a:\norm{a}_1=1}\frac{1}{\norm{a}_\Sigma} \sum_{k=1}^n\E\frac{\abs{a^\top X_k}^3}{\sqrt{n}}\I(\abs{a^\top _k} < \sqrt{n}\norm{a}_\Sigma)  \Bigg\}.
    \end{split}
\end{equation*}
\end{itemize}
\end{theorem}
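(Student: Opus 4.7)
The plan is to use the integral representation~\eqref{eq: eq 9.1} of $f$ to reduce the multidimensional problem to a one-dimensional estimate, and then invoke \Cref{bound for relu function} for $s=2$ (respectively \Cref{bound for squared relu function} for $s=3$) applied to a suitable one-dimensional projection of the summands.

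First, I would substitute the representation into $\E f(S_{n,X}) - \E f(Z)$ and use Fubini--Tonelli to interchange the $P$-integral with the expectations, obtaining
\begin{equation*}
  \E f(S_{n,X}) - \E f(Z) \;=\; v\int \eta(t,a)\bigl[\E(a^\top S_{n,X}-t)^{s-1}_+ - \E(a^\top Z-t)^{s-1}_+\bigr]\,dP(t,a).
\end{equation*}
Since $|\eta|\equiv 1$ and $P$ is a probability measure, the triangle inequality and passing to a supremum over $(t,a)\in[0,1]\times\{a:\norm{a}_1=1\}$ yields
\begin{equation*}
  |\E f(S_{n,X}) - \E f(Z)| \;\le\; |v|\sup_{t,a}\bigl|\E(a^\top S_{n,X}-t)^{s-1}_+ - \E(a^\top Z-t)^{s-1}_+\bigr|.
\end{equation*}

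Second, for each fixed $a$ I would reduce the inner quantity to the setting of the earlier ReLU theorems by setting $W_k := a^\top X_k/\sqrt{n}$. These are independent mean-zero univariate variables with $S_{n,W} = \sum_k W_k = a^\top S_{n,X}$ and $B_{n,W}^2 = \sum_k \var(W_k) = a^\top \Sigma a = \norm{a}_\Sigma^2$; moreover $a^\top Z \ed B_{n,W}Z_0$ for $Z_0\sim\gauss(0,1)$, so the integrand equals $|\E\sigma^{s-1}(S_{n,W}-t) - \E\sigma^{s-1}(B_{n,W}Z_0-t)|$ since $(y)^{s-1}_+ = \sigma^{s-1}(y)$ for $s\in\{2,3\}$. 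For $s=2$, \Cref{bound for relu function} yields a bound expressed in the $W_k$'s; translating back via $\E W_k^2 = \E(a^\top X_k)^2/n$, $\E|W_k|^3 = \E|a^\top X_k|^3/n^{3/2}$, and rewriting the indicator $|W_k|\ge t+B_{n,W}$ as $|a^\top X_k|\ge\sqrt{n}(t+\norm{a}_\Sigma)$, produces the required overall $1/n$ scaling. For $s=3$ the same substitution into \Cref{bound for squared relu function} generates the logarithmic factor visible in the statement.

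The main technical obstacle is removing the $t$-dependence uniformly for $t\in[0,1]$ so that the indicator sets in the resulting bound match those in the statement (which only feature the threshold $\sqrt{n}\norm{a}_\Sigma$). The tail term $\E W_k^2\,\I(|W_k|\ge t+B_{n,W})$ is decreasing in $t$, so setting $t=0$ trivially gives an upper bound. The truncated cubic term $\E|W_k|^3\,\I(|W_k|<t+B_{n,W})/(t+B_{n,W})^2$ is not monotone in $t$, so I would split
\begin{equation*}
  \I(|W_k|<t+B_{n,W}) \;=\; \I(|W_k|<B_{n,W}) + \I(B_{n,W}\le|W_k|<t+B_{n,W}),
\end{equation*}
bound the first piece by replacing $(t+B_{n,W})^{-2}$ with $B_{n,W}^{-2}$, and on the second piece observe that $|W_k|^3/(t+B_{n,W})^2 < |W_k| \le W_k^2/B_{n,W}$, which gets absorbed into the tail term at the cost of enlarging the absolute constant $A$. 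After this uniform reduction, taking $\sup_{\norm{a}_1=1}$ over each of the two resulting sums and multiplying by $|v|$ delivers the claimed bounds for $s=2$ and (via the analogous bookkeeping for the log-weighted term) for $s=3$.
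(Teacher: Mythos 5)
Your proposal follows essentially the same route as the paper: substitute the integral representation, interchange the $P$-integral with the expectations, identify $W_k = a^\top X_k/\sqrt{n}$ so that $a^\top S_{n,X} = \sum_k W_k$, $B_{n,W} = \norm{a}_\Sigma$ and $a^\top Z \eqdst \norm{a}_\Sigma Z_0$, and then invoke \Cref{bound for relu function} (for $s=2$) or \Cref{bound for squared relu function} (for $s=3$) before taking the supremum over $a$. The translation of moments and indicators back to the $a^\top X_k$ scale is exactly as in the paper, and the claimed bounds follow.

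The one place where you genuinely improve on the paper is the uniformity in $t$. The paper applies the ReLU theorems and then simply writes the resulting bound with $t=0$ (indicators at threshold $B_{n,W}$ and denominator $B_{n,W}^2$ instead of $(t+B_{n,W})^2$), without justifying why the bound at general $t\in[0,1]$ is dominated by the bound at $t=0$; as you note, the truncated third-moment term is not monotone in $t$, so this step needs an argument. Your splitting $\I(|W_k|<t+B_{n,W}) = \I(|W_k|<B_{n,W}) + \I(B_{n,W}\le |W_k|<t+B_{n,W})$, with $|W_k|^3/(t+B_{n,W})^2 < |W_k| \le W_k^2/B_{n,W}$ on the second event so that it is absorbed into the tail term, closes this gap at the cost of an absolute constant (which is harmless since the theorem only asserts a universal $A$); the analogous absorption of $\ln(1+tB_{n,W}^{-1})$ into $\ln(|W_k|/B_{n,W})$ on the tail event handles $s=3$. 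So your proof is correct and, on this point, more complete than the paper's.
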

\begin{proof}
Let $W_i = {a^{\top}X_i}/{\sqrt{n}}$ for $i = 1, \dots, n$. We define the following notations:
\begin{equation*}
        S_{n,X} \coloneqq \frac{1}{\sqrt{n}}\sum_{i=1}^n X_i, \quad \Sigma =  \mathrm{Var}\left(\frac{1}{\sqrt{n}}\sum_{i=1}^n X_i\right), \quad \Tilde{S}_{n,W}\coloneqq \sum_{i=1}^n W_i, \quad B_{n,W}^2 = \sum_{i=1}^n \mathrm{Var}(W_i).
\end{equation*}

We have, 
\begin{equation*}
    a^{\top}S_{n,X} = a^{\top} \left(\frac{1}{\sqrt{n}}\sum_{i=1}^n X_i \right) = \sum_{i=1}^n \frac{a^{\top}X_i}{\sqrt{n}} = \sum_{i=1}^n W_i = \Tilde{S}_{n,W}
\end{equation*}

Also, 
\begin{equation*}
    \begin{split}
        & Z \sim \gauss(0,\mathrm{Var}(S_{n,X})) \\
        \implies & Z \sim \gauss\left(0,\mathrm{Var}\left(\frac{1}{\sqrt{n}}\sum_{i=1}^n X_i\right) \right)  \\
        \implies  & Z \sim \gauss\left(0, \frac{1}{n} \sum_{i=1}^n \mathrm{Var}(X_i) \right)  \quad [\because X_i \text{'s are independent}] \\
        \implies  & a^{\top}Z \sim \gauss\left(0, \frac{a^{\top}a}{n} \sum_{i=1}^n \mathrm{Var}(X_i) \right) \\
        \implies  & a^{\top}Z \sim \gauss\left(0, \sum_{i=1}^n \mathrm{Var}\left(\frac{a^{\top}X_i}{\sqrt{n}}\right) \right) \\
        \implies  & a^{\top}Z \sim \gauss\left(0,\sum_{i=1}^n \mathrm{Var}(W_i)\right) \\
        \implies  & a^{\top}Z \sim \gauss(0,B_{n,W}^2) 
    \end{split}
\end{equation*}

Now, 
\begin{equation}
\label{eq: eq 9.1}
    \begin{split}
      \abs{\E f(S_{n,X}) - \E f(Z)} & = \abs{v\int_{[0,1]\times\{a:\norm{a}=1\}}\eta(t,a)\big[\E(a^{\top}S_{n,X} - t)^{s-1}_+ - \E(a^{\top}Z - t)^{s-1}_+\big]dP(t,a)} \\
      & = \abs{v\int_{[0,1]\times\{a:\norm{a}=1\}}\eta(t,a)\big[\E\sigma(a^{\top}S_{n,X} - t)^{s-1} - \E\sigma(a^{\top}Z - t)^{s-1}\big]dP(t,a)} \\
      & \leq \abs{v}\int_{[0,1]\times\{a:\norm{a}=1\}}\abs{\eta(t,a)}\abs{\E\sigma(a^{\top}S_{n,X} - t)^{s-1} - \E\sigma(a^{\top}Z - t)^{s-1}}dP(t,a) \\
      & [\eta(t,a) \in {-1,1} \implies \abs{\eta(t,a)} = 1 \forall t \in [0,1], a \in \{a:\norm{a}=1\}] \\
      & = \abs{v} \int_{[0,1]\times\{a:\norm{a}=1\}}\abs{\E\sigma(a^{\top}S_{n,X} - t)^{s-1} - \E\sigma(a^{\top}Z - t)^{s-1}}dP(t,a)
    \end{split}
\end{equation}

For s = 2, continuing from \cref{eq: eq 9.1}, we have, 

\begin{equation}
\label{eq: eq 9.2}
    \begin{split}
      \abs{\E f(S_{n,X}) - \E f(Z)} & \leq \abs{v} \int_{[0,1]\times\{a:\norm{a}_1=1\}}\abs{\E\sigma(a^{\top}S_{n,X} - t) - \E\sigma(a^{\top}Z - t)}dP(t,a) \\
      & \leq \abs{v} \int_{[0,1]\times\{a:\norm{a}_1=1\}}\abs{\E\sigma(\Tilde{S}_{n,W}- t) - \E\sigma(a^{\top}Z - t)}dP(t,a) \\
      & \leq \abs{v}\int_{[0,1]\times\{a:\norm{a}_1=1\}}\left[\frac{A}{B_{n,w}}\sum_{k=1}^n \E W_k^2\I(\abs{W_k} \geq B_{n,W}) + \frac{A}{2B_{n,W}^2}\sum_{k=1}^n\E \abs{W_k}^3\I(\abs{W_k} < B_{n,W})\right] dP(t,a)  \\
      & [\text{using }\cref{bound for relu function}] \\
      & \leq \abs{v}\sup_{[0,1]\times\{a:\norm{a}_1=1\}}\left[\frac{A}{B_{n,w}}\sum_{k=1}^n \E W_k^2\I(\abs{W_k} \geq B_{n,W}) + \frac{A}{2B_{n,W}^2}\sum_{k=1}^n\E \abs{W_k}^3\I(\abs{W_k} < B_{n,W})\right]\\
      & =  A\abs{v}\sup_{[0,1]\times\{a:\norm{a}_1=1\}}\left[\frac{1}{B_{n,w}}\sum_{k=1}^n \E W_k^2\I(\abs{W_k} \geq B_{n,W}) + \frac{1}{2B_{n,W}^2}\sum_{k=1}^n\E \abs{W_k}^3\I(\abs{W_k} < B_{n,W})\right]\\
      & =  A\abs{v}\sup_{a:\norm{a}_1=1}\left[\frac{1}{B_{n,w}}\sum_{k=1}^n \E W_k^2\I(\abs{W_k} \geq B_{n,W}) + \frac{1}{2B_{n,W}^2}\sum_{k=1}^n\E \abs{W_k}^3\I(\abs{W_k} < B_{n,W})\right] \\
      & = A\abs{v}\sup_{a:\norm{a}_1=1}\Bigg[\frac{1}{\sqrt{a^\top\Sigma a}}\sum_{k=1}^n \E \frac{(a^\top X_k)^2}{n}\I(\abs{a^\top X_k} \geq \sqrt{na^\top\Sigma a} ) + \\ 
      & \frac{1}{2a^\top\Sigma a}\sum_{k=1}^n\E \frac{\abs{a^\top X_k}^3}{n\sqrt{n}}\I(\abs{a^\top X_k} <  \sqrt{na^\top\Sigma a})\Bigg]\\
      & = \frac{A\abs{v}}{n}\sup_{a:\norm{a}_1=1}\Bigg[\frac{1}{\norm{a}_\Sigma}\sum_{k=1}^n \E  (a^\top X_k)^2 \I(\abs{a^\top X_k} \geq \sqrt{n}\norm{a}_\Sigma) + \\ 
      & \frac{1}{2\norm{a}_\Sigma^2}\sum_{k=1}^n\E \frac{\abs{a^\top X_k}^3}{\sqrt{n}}\I(\abs{a^\top X_k} <  \sqrt{n}\norm{a}_\Sigma)\Bigg]\\
      & \leq \frac{A\abs{v}}{n}\Bigg\{\sup_{a:\norm{a}_1=1}\left[\frac{1}{\norm{a}_\Sigma}\sum_{k=1}^n \E  (a^\top X_k)^2 \I(\abs{a^\top X_k} \geq \sqrt{n}\norm{a}_\Sigma)\right] + \\ 
      & \sup_{a:\norm{a}_1=1}\left[\frac{1}{2\norm{a}_\Sigma^2}\sum_{k=1}^n\E \frac{\abs{a^\top X_k}^3}{\sqrt{n}}\I(\abs{a^\top X_k} <  \sqrt{n}\norm{a}_\Sigma)\right]\Bigg\}\\
      & \Bigg[\text{where, } A \text{ is an absolute constant, } \Sigma =  \mathrm{Var}\left(\frac{1}{\sqrt{n}}\sum_{i=1}^n X_i\right) \text{ and } \norm{a}_\Sigma = \sqrt{a^\top\Sigma a}\Bigg] 
    \end{split}
\end{equation}

For s = 3, continuing from \cref{eq: eq 9.1}, we have, 

\begin{equation}
\label{eq: eq 9.3}
    \begin{split}
      \abs{\E f(S_{n,X}) - \E f(Z)}  & \leq \abs{v} \int_{[0,1]\times\{a:\norm{a}_1=1\}}\abs{\E\sigma^2(a^{\top}S_{n,X} - t) - \E\sigma^2(a^{\top}Z - t)}dP(t,a) \\
      & \abs{v} \int_{[0,1]\times\{a:\norm{a}_1=1\}}\abs{\E\sigma^2(\Tilde{S}_{n,W}- t) - \E\sigma^2(a^{\top}Z - t)}dP(t,a) \\
      & \leq \abs{v}\int_{[0,1]\times\{a:\norm{a}_1=1\}}\Bigg\{2A \sum_{k=1}^n \E W_k^2\left[\ln\frac{\abs{W_k}}{B_{n,W}}+1\right]\I(\abs{W_k} \geq B_{n,W}) + \\
      & 2 \frac{A}{B_{n,W}} \sum_{k=1}^n\E\abs{W_k}^3\I(\abs{W_k} < B_{n,W})  \Bigg\} dP(t,a)  \quad [\text{using }\cref{bound for squared relu function} ]  \\
      & = 2A\abs{v}\sup_{a:\norm{a}_1=1}\Bigg\{\sum_{k=1}^n \E W_k^2\left[\ln\frac{\abs{W_k}}{B_{n,W}}+1\right]\I(\abs{W_k} \geq B_{n,W}) + \\
      & \frac{1}{B_{n,W}} \sum_{k=1}^n\E\abs{W_k}^3\I(\abs{W_k} < B_{n,W})  \Bigg\}\\
      & = 2A\abs{v}\sup_{a:\norm{a}_1=1}\Bigg\{\sum_{k=1}^n \E \frac{(a^\top X_k)^2}{n} \left[\ln\frac{\abs{a^\top X_k}}{\norm{a}_\Sigma}+1\right]\I(\abs{a^\top X_k} \geq \sqrt{n}\norm{a}_\Sigma) + \\
      & \frac{1}{\norm{a}_\Sigma} \sum_{k=1}^n\E\frac{\abs{a^\top X_k}^3}{n\sqrt{n}}\I(\abs{a^\top X_k} < \sqrt{n}\norm{a}_\Sigma)  \Bigg\}\\
      & = 2\frac{A\abs{v}}{n}\sup_{a:\norm{a}_1=1}\Bigg\{\sum_{k=1}^n \E (a^\top X_k)^2 \left[\ln\frac{\abs{a^\top X_k}}{\norm{a}_\Sigma}+1\right]\I(\abs{a^\top X_k} \geq \sqrt{n}\norm{a}_\Sigma) + \\
      & \frac{1}{\norm{a}_\Sigma} \sum_{k=1}^n\E\frac{\abs{a^\top X_k}^3}{\sqrt{n}}\I(\abs{a^\top X_k} < \sqrt{n}\norm{a}_\Sigma)  \Bigg\}\\
      & \leq  2\frac{A\abs{v}}{n}\Bigg\{\sup_{a:\norm{a}_1=1}\sum_{k=1}^n \E \left[(a^\top X_k)^2 \ln\left(\frac{e\abs{a^\top X_k}}{\norm{a}_\Sigma}\right)\right]\I(\abs{a^\top X_k} \geq \sqrt{n}\norm{a}_\Sigma) + \\
      & \sup_{a:\norm{a}_1=1}\frac{1}{\norm{a}_\Sigma} \sum_{k=1}^n\E\frac{\abs{a^\top X_k}^3}{\sqrt{n}}\I(\abs{a^\top _k} < \sqrt{n}\norm{a}_\Sigma)  \Bigg\}\\
      & \Bigg[\text{where, } A \text{ is an absolute constant, } \Sigma =  \mathrm{Var}\left(\frac{1}{\sqrt{n}}\sum_{i=1}^n X_i\right) \text{ and } \norm{a}_\Sigma = \sqrt{a^\top\Sigma a}\Bigg] 
    \end{split}
\end{equation}
\end{proof}

\begin{theorem}
\label{Theorem 2 of klusowski2018approximation}
Suppose $X_1, \dots, X_n$ is a sequence of mean zero independent $d$-dimensional random vectors. Set $S_{n,X} = n^{-1/2}\sum_{i=1}^n X_i$. Let $Z$ be a $d$-dimensional Gaussian random vector with mean zero and variance-covariance matrix given by $\Sigma = \mathrm{Var}(S_{n,X})$. Let $D = [-1,1]^d$. Suppose $f: D \longrightarrow \R$ admits a Fourier representation $f(x) = \int_{\R^d}e^{ix^\top \omega} \mathcal{F}(f)(\omega)d\omega$ and
\begin{equation*}
    v_{f,2} = \int_{\R^d} \norm{\omega}_1^2\abs{\mathcal{F}(f)(\omega)}d\omega < \infty.
\end{equation*}
Then, 
$\exists$ a probability measure $P$ on $[0,1]\times \{a: \norm{a} = 1\}$, $\eta \in \{\pm 1\}$, $s = 2$ and $v$ such that $\abs{v} \leq 2v_{f,2}$, such that,
\begin{equation*}
    f(x) - f(0) - x\cdot \nabla f(0) = v \int_{[0,1] \times \{a: \norm{a} = 1\}} \eta(t,a) (a\cdot x - t)_+^{s-1} dP(t,a) \quad \forall x \in D. 
\end{equation*}
and hence, we have,
\begin{equation*}
\begin{split}
    \abs{\E f(S_{n,X}) - \E f(Z)} & \leq  2\frac{Av_{f,2}}{n}\Bigg\{ \sup_{a:\norm{a}=1} \left[\frac{1}{\norm{a}_\Sigma} \sum_{k=1}^{n}\E(a^\top X_k)^2 \I\left(\abs{a^\top X_k} \ge \sqrt{n}\norm{a}_\Sigma\right)\right] \\
    & + \sup_{a:\norm{a}=1} \left[\frac{1}{2\norm{a}_\Sigma^2} \sum_{k=1}^{n}\E\frac{\abs{a^\top X_k}^3}{\sqrt{n}}\I\left(\abs{a^\top X_k} < \sqrt{n}\norm{a}_\Sigma\right)\right]\Bigg\}
\end{split}
\end{equation*}
for an absolute constant $A$.
\end{theorem}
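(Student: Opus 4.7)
The plan is to reduce the claim to \Cref{bound for functions which have an integral representation of the form given in Theorem 1 of klusowski2018approximation} via the integral representation from \cite{klusowski2018approximation}, killing the affine part using the mean-zero property of $S_{n,X}$ and $Z$. First, I invoke Theorem 2 of \cite{klusowski2018approximation}, which under the hypothesis $v_{f,2} < \infty$ directly produces a probability measure $P$ on $[0,1]\times\{a:\norm{a}=1\}$, a sign function $\eta \in \{\pm 1\}$, and a scalar $v$ with $\abs{v} \le 2 v_{f,2}$ such that, setting
\[
g(x) \defeq v \int_{[0,1]\times\{a:\norm{a}=1\}} \eta(t,a)(a\cdot x - t)_+\, dP(t,a),
\]
we have $f(x) = f(0) + x^\top \nabla f(0) + g(x)$ for all $x \in D$.

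Next, I would use linearity of expectation to split
\[
\E f(S_{n,X}) - \E f(Z) = \nabla f(0)^\top \big(\E S_{n,X} - \E Z\big) + \big(\E g(S_{n,X}) - \E g(Z)\big),
\]
and observe that the first term vanishes because both $S_{n,X}$ and $Z$ are mean zero. The constant term $f(0)$ also obviously cancels. Thus the problem is reduced to bounding $|\E g(S_{n,X}) - \E g(Z)|$ for the function $g$, which by construction already has the integral representation \eqref{eq: eq 9.1} with $s = 2$, scalar coefficient $v$, the same $P$, and the same $\eta$.

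Finally, I apply \Cref{bound for functions which have an integral representation of the form given in Theorem 1 of klusowski2018approximation} in the case $s = 2$ directly to $g$, and then substitute the bound $\abs{v} \le 2 v_{f,2}$ into the resulting inequality. This immediately yields the stated conclusion.

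The main obstacle is a technical one rather than a conceptual one: the Klusowski--Barron representation is stated for $x \in D = [-1,1]^d$, while $S_{n,X}$ and $Z$ need not be supported in $D$. To use the decomposition inside the expectation, I would either (i) extend $f$ to all of $\R^d$ by declaring $f(x) \defeq f(0) + x^\top \nabla f(0) + g(x)$ globally (the right-hand side is well-defined for every $x \in \R^d$ since $(a\cdot x - t)_+$ is), or (ii) restrict attention to distributions whose support lies in $D$; the latter is the intended setting of the paper. A secondary routine point is justifying Fubini to swap $\E$ with the integral against $dP$, which is immediate since $\abs{\eta} = 1$ and $\E \abs{a\cdot S_{n,X} - t}$ is finite under the stated second moment assumptions.
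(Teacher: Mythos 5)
Your proposal is correct and follows essentially the same route as the paper: obtain the Klusowski--Barron integral representation with $\abs{v}\le 2v_{f,2}$, cancel the affine part using $\E S_{n,X} = \E Z = 0$, and apply the $s=2$ case of \Cref{bound for functions which have an integral representation of the form given in Theorem 1 of klusowski2018approximation}. The only differences are that the paper re-derives the representation from the identity $-\int_0^c\left[(z-u)_+e^{iu}+(-z-u)_+e^{-iu}\right]du = e^{iz}-iz-1$ rather than citing it as a black box, and that your caveat about the representation being guaranteed only on $D=[-1,1]^d$ while $S_{n,X}$ and $Z$ need not be supported there is a legitimate point that the paper's proof silently passes over.
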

\begin{proof}
For $\abs{z} \le c$, we have the identity,
\begin{equation*}
    -\int_0^c\left[(z-u)_+ e^{iu} + (-z-u)_+ e^{-iu}\right] du = e^{iz}-iz-1  
\end{equation*}
Taking $c = \norm{\omega}_1$, $z = \omega ^\top x$, $a = a(\omega) = \frac{\omega}{\norm{\omega}_1}$ and $u = \norm{\omega}_1 t$, $t \geq 0$, we have
\begin{equation*}
          -\norm{\omega}_1^2 \int_0^\infty \left[(a^\top x-t)_+ e^{i\norm{\omega}_1 t} + (-a^\top x-t)_+ e^{-i\norm{\omega}_1 t}\right] dt = e^{i\omega^\top x} - i\omega^\top x - 1 
\end{equation*}
Multiplying the above equation by $\mathcal{F}(f)(\omega) = e^{ib(\omega)}\abs{\mathcal{F}(f)(\omega)}$, where $b(\omega)$  is the angle made by $\mathcal{F}(f)(\omega)$ with the real axis, we have, 
\begin{equation*}
      -\norm{\omega}_1^2 e^{ib(\omega)}\abs{\mathcal{F}(f)(\omega)}   \int_0^1 \left[(a^\top x-t)_+ e^{i\norm{\omega}_1 t} + (-a^\top x-t)_+ e^{-i\norm{\omega}_1 t}\right] dt = \mathcal{F}(f)(\omega) (e^{i\omega^\top x} - i\omega^\top x - 1 )  
\end{equation*}
Integrating the above over $\omega\in\R^d$, we have,
\begin{equation}
\label{eq: eq 10.1}
     - \int_{\R^d} \norm{\omega}_1^2 e^{ib(\omega)}\abs{\mathcal{F}(f)(\omega)}   \int_0^1 \left[(a^\top x-t)_+ e^{i\norm{\omega}_1 t} + (-a^\top x-t)_+ e^{-i\norm{\omega}_1 t}\right] dt d\omega =  \int_{\R^d} \mathcal{F}(f)(\omega) (e^{i\omega^\top x} - i\omega^\top x - 1 ) d\omega  
\end{equation}
The R.H.S. of \cref{eq: eq 10.1} can be written as 
\begin{equation}
\label{eq: eq 10.2}
    \begin{split}
        \int_{\R^d} \left[\mathcal{F}(f)(\omega)e^{i\omega^\top x} - \mathcal{F}(f)(\omega)i\omega^\top x  - \mathcal{F}(f)(\omega)\right]d\omega = f(x) - x ^\top \nabla f(0) - f(0)
    \end{split}
\end{equation}
Now, we consider the L.H.S. of \cref{eq: eq 10.1} and show that it satisfies the condition of \href{https://en.wikipedia.org/wiki/Fubini\%27s\_theorem}{Fubini's Theorem}. Consider the expression (from the L.H.S. of~\cref{eq: eq 10.1})
\begin{equation*}
    \begin{split}
      - \int_{\R^d} \norm{\omega}_1^2 e^{ib(\omega)}\abs{\mathcal{F}(f)(\omega)}   \int_0^1 \left[(a^\top x-t)_+ e^{i\norm{\omega}_1 t} + (-a^\top x-t)_+ e^{-i\norm{\omega}_1 t}\right] dt d\omega
    \end{split}
\end{equation*}
We wish to show that 
\begin{equation*}
    - \int_{\R^d} \abs{\norm{\omega}_1^2 e^{ib(\omega)}\abs{\mathcal{F}(f)(\omega)}} \int_0^1 \abs{(a^\top x-t)_+ e^{i\norm{\omega}_1 t} + (-a^\top x-t)_+ e^{-i\norm{\omega}_1 t}} dt d\omega < \infty
\end{equation*}
Now,
\begin{equation*}
\begin{split}
    \int_0^1 \abs{(a^\top x-t)_+ e^{i\norm{\omega}_1 t} + (-a^\top x-t)_+ e^{-i\norm{\omega}_1 t}} dt & \leq \int_0^1 \abs{(a^\top x-t)_+ e^{i\norm{\omega}_1 t}} dt + \int_0^1 \abs{(-a^\top x-t)_+ e^{-i\norm{\omega}_1 t}} dt \\
    & = \int_0^1  (a^\top x-t)_+ \abs{e^{i\norm{\omega}_1 t}} dt + \int_0^1 (-a^\top x-t)_+ \abs{e^{-i\norm{\omega}_1 t}} dt \\
    & = \int_0^1  (a^\top x-t)_+ dt + \int_0^1  (-a^\top x-t)_+ dt
\end{split}
\end{equation*}
Now, we see that, 
\begin{equation*}
\begin{split}
    \int_0^1  (a^\top x-t)_+ dt & \le \int_0^\infty  (a^\top x-t)_+ dt \\
    & = \int_0^\infty  (a^\top x-t) \id\{a^\top x-t \geq 0\} \\
    & = \int_0^\infty  (a^\top x-t) \id\{a^\top x \geq t\} \\
    & = \int_0^{a^\top x} (a^\top x-t) \id\{a^\top x \geq 0\} \\
    & = \left[-\frac{(a^\top x - t)^2}{2}\right]_0^{a^\top x}\id\{a^\top x \geq 0\}\\
    & = \frac{(a^\top x)^2}{2}\id\{a^\top x \geq 0\}\\
    & = \frac{(a^\top x)^2_+}{2}
\end{split}
\end{equation*}
and
\begin{equation*}
\begin{split}
    \int_0^1  (-a^\top x-t)_+ dt & \le \int_0^\infty  (-a^\top x-t)_+ dt \\
    & = \int_0^\infty  (-a^\top x-t) \id\{-a^\top x-t \geq 0\} \\
    & = \int_0^\infty  (-a^\top x-t) \id\{-a^\top x \geq t\} \\
    & = \int_0^{-a^\top x} (-a^\top x-t) \id\{-a^\top x \geq 0\} \\
    & = -\int_0^{-a^\top x} (a^\top x+t) \id\{-a^\top x \geq 0\} \\
    & = -\left[\frac{(a^\top x + t)^2}{2}\right]_0^{-a^\top x}\id\{-a^\top x \geq 0\}\\
    & = -\frac{(a^\top x)^2}{2}\id\{-a^\top x \geq 0\}\\
    & = -\frac{(-a^\top x)^2}{2}\id\{-a^\top x \geq 0\}\\
    & = -\frac{(-a^\top x)^2_+}{2}
\end{split}
\end{equation*}
Thus, we have, 
\begin{equation*}
    \begin{split}
    - \int_{\R^d} \abs{\norm{\omega}_1^2 e^{ib(\omega)}\abs{\mathcal{F}(f)(\omega)}} \int_0^1 \abs{(a^\top x-t)_+ e^{i\norm{\omega}_1 t} + (-a^\top x-t)_+ e^{-i\norm{\omega}_1 t}} dt d\omega \\
    \leq - \left[\frac{(a^\top x)^2_+}{2} -\frac{(-a^\top x)^2_+}{2} \right]\int_{\R^d}\norm{\omega}_1^2\abs{e^{ib(\omega)}}\abs{\mathcal{F}(f)(\omega)} d\omega\\
    =  \left[\frac{(-a^\top x)^2_+}{2} -\frac{(a^\top x)^2_+}{2}\right]\int_{\R^d}\norm{\omega}_1^2\abs{\mathcal{F}(f)(\omega)} d\omega 
    = \left[\frac{(-a^\top x)^2_+}{2} -\frac{(a^\top x)^2_+}{2}\right] v_{f,2}
    \leq \frac{(-a^\top x)^2_+}{2} v_{f,2} \\
    \leq \frac{\abs{-a^\top x}^2}{2} v_{f,2} \leq \frac{\norm{x}_\infty^2}{2}v_{f,2} < \infty
    \end{split}
\end{equation*}
Thus, we can say that the L.H.S. of \cref{eq: eq 10.1} satisfies the condition of \href{https://en.wikipedia.org/wiki/Fubini\%27s\_theorem}{Fubini's Theorem}. So, continuing from \cref{eq: eq 10.1}, we have, 
\begin{equation}
\label{eq: eq 10.3}
     - \int_{\R^d}  \int_0^1 \norm{\omega}_1^2 e^{ib(\omega)}\abs{\mathcal{F}(f)(\omega)} \left[(a^\top x-t)_+ e^{i\norm{\omega}_1 t} + (-a^\top x-t)_+ e^{-i\norm{\omega}_1 t}\right] dt d\omega =  \int_{\R^d} \mathcal{F}(f)(\omega) (e^{i\omega^\top x} - i\omega^\top x - 1 ) d\omega  
\end{equation}
Taking $g(t,\omega) = -[(a^\top x - t)_+ \cos(\norm{\omega}_1t + b(\omega)) + (-a^\top x - t)_+ \cos(\norm{\omega}_1t - b(\omega))]\norm{\omega}_1^2\abs{\F (f)(\omega)}$, we can write, 
\begin{equation*}
\begin{split}
    & \int_{\R^d}\int_0^1 g(t,\omega)dtd\omega = \int_{\R^d}  \mathcal{F}(f)(\omega) (e^{i\omega^\top x} - i\omega^\top x - 1 ) d\omega \\
    \implies & \int_{\R^d}\int_0^1 g(t,\omega)dtd\omega = f(x) - x^\top \nabla f(0) - f(0)
\end{split}
\end{equation*}
Consider the probability measure on $[0,1]\times\R^d$ defined by 
\begin{equation*}
    dP'(t,\omega) = \frac{1}{v}\abs{\cos(\norm{\omega}_1t + b(\omega)}\norm{\omega}_1^2\abs{\F(f)(\omega)}
\end{equation*}
where, $v = \int_{\R^d}\int_0^1[\abs{\cos(\norm{\omega}_1t + b(\omega))} + \abs{\cos(\norm{\omega}_1t - b(\omega))}]\norm{\omega}_1^2\abs{\F(f)(\omega)}dtd\omega \le 2v_{f,2}$. 

Define a function $h(t,a) = (a^\top x - t)_+\eta(t,\omega)$, where, $\eta(t,\omega)= -sgn \cos(\norm{\omega}_1t + b(\omega))$. We note that $h(t,a)(x)$ is of the form $\pm(a^\top x - t)_+ = \eta'(t,\omega)(a^\top x - t)_+$, where, $\eta'(t,\omega) \in \{\pm1\}$. Thus, we see that, 
\begin{equation*}
    \begin{split}
        f(x) - f(0) - \nabla f(0)^\top x & = v \int_{[0,1]\times\R^d} \eta'(t,\omega)(a^\top x - t)_+dP'(t,\omega) \\
        & = v\int_{[0,1]\times\{a: \norm{a} = 1\}}\eta(t,a)(a^\top x - t)_+dP(t,a) \quad \bigg[\because a(\omega) = \frac{\omega}{\norm{\omega}_1}\bigg]
    \end{split}
\end{equation*}
where, $\eta'(t,a(\omega)) = \eta'(t,\omega) \in \{\pm1\}$. 
Thus, we have proved that, $\exists$ a probability measure $P$ on $[0,1]\times \{a: \norm{a} = 1\}$, $\eta \in \{\pm 1\}$, $s = 2$ and $v$ such that $\abs{v} \leq 2v_{f,2}$, such that,
\begin{equation*}
    f(x) - f(0) - x\cdot \nabla f(0) = v \int_{[0,1] \times \{a: \norm{a} = 1\}} \eta(t,a) (a\cdot x - t)_+^{s-1} dP(t,a) \quad \forall x \in D. 
\end{equation*}
Then, we have, 
\begin{equation*}
\begin{split}
    \E f(S_{n,X}) - \E f(Z) & = \E\Bigg[f(0) + \nabla f(0)^\top S_{n,X} + v \int_{[0,1] \times \{a: \norm{a} = 1\}} \eta(t,a) (a\cdot S_{n,X} - t)_+^{s-1} dP(t,a)\Bigg]  - \\
    & \E\Bigg[f(0) + \nabla f(0)^\top Z + v \int_{[0,1] \times \{a: \norm{a} = 1\}} \eta(t,a) (a\cdot Z - t)_+^{s-1} dP(t,a)] \\
    & = \nabla f(0)^\top \E(S_{n,X} - Z) + \E g(S_{n,X}) - \E g(Z) \\ 
    & = \E g(S_{n,X}) = \E g(Z) \\
    & [\because \E S_{n,X} - \E Z \implies \nabla f(0)^\top \E(S_{n,X} - Z) = 0]
\end{split}
\end{equation*}
where, $g$ is a function that admits a representation of the form $g(x) = v \int_{[0,1] \times \{a: \norm{a} = 1\}} \eta(t,a) (a\cdot x - t)_+^{s-1} dP(t,a)$.

Then, we have, 
\begin{equation*}
\begin{split}
       \abs{\E f(S_{n,X}) - \E f(Z)} & = \abs{\E g(S_{n,X}) - \E g(Z)} \\
       & \leq \frac{A\abs{v}}{n}\Bigg\{\sup_{a:\norm{a}_1=1}\left[\frac{1}{\norm{a}_\Sigma}\sum_{k=1}^n \E  (a^\top X_k)^2 \I(\abs{a^\top X_k} \geq \sqrt{n}\norm{a}_\Sigma)\right] + \\ 
      & \sup_{a:\norm{a}_1=1}\left[\frac{1}{2\norm{a}_\Sigma^2}\sum_{k=1}^n\E \frac{\abs{a^\top X_k}^3}{\sqrt{n}}\I(\abs{a^\top X_k} <  \sqrt{n}\norm{a}_\Sigma)\right]\Bigg\} \\
      & [\text{using the bound for }s=2 \text{ in }\cref{bound for functions which have an integral representation of the form given in Theorem 1 of klusowski2018approximation}] \\
      & \leq 2\frac{Av_{f,2}}{n}\Bigg\{\sup_{a:\norm{a}_1=1}\left[\frac{1}{\norm{a}_\Sigma}\sum_{k=1}^n \E  (a^\top X_k)^2 \I(\abs{a^\top X_k} \geq \sqrt{n}\norm{a}_\Sigma)\right] + \\ 
      & \sup_{a:\norm{a}_1=1}\left[\frac{1}{2\norm{a}_\Sigma^2}\sum_{k=1}^n\E \frac{\abs{a^\top X_k}^3}{\sqrt{n}}\I(\abs{a^\top X_k} <  \sqrt{n}\norm{a}_\Sigma)\right]\Bigg\}
\end{split}
\end{equation*}
for an absolute constant $A$. 
\end{proof}


\begin{theorem}
\label{Theorem 3 of klusowski2018approximation}
Suppose $X_1, \dots, X_n$ is a sequence of mean zero independent $d$-dimensional random vectors. Set $S_{n,X} = n^{-1/2}\sum_{i=1}^n X_i$. Let $Z$ be a $d$-dimensional Gaussian random vector with mean zero and variance-covariance matrix given by $\Sigma = \mathrm{Var}(S_{n,X})$. Let $D: [-1,1]^d$. Suppose $f: D \longrightarrow \R$ admits a Fourier representation $f(x) = \int_{\R^d}e^{ix^\top \omega} \mathcal{F}(f)(\omega)d\omega$ and
\begin{equation*}
    v_{f,3} = \int_{\R^d} \norm{\omega}_1^3\abs{\mathcal{F}(f)(\omega)}d\omega < \infty.
\end{equation*}
Then, $\exists$ a probability measure $P$ on $[0,1]\times \{a: \norm{a} = 1\}$, $\eta \in \{\pm 1\}$, $s = 3$ and $v$ such that $\abs{v} \leq 2v_{f,3}$, such that,
\begin{equation*}
    f(x) - f(0) - \nabla f(0)^\top x - \frac{x^\top \nabla_2f(0) x}{2} =  v\int_{[0,1]\times\{a:\norm{a}=1\}}\eta(t,a)(a^\top x - t)^{s-1}_+dP(t,a) \quad \forall x \in D.
\end{equation*}
and hence, we have,
\begin{equation*}
\begin{split}
    \abs{\E f(S_{n,X}) - \E f(Z)}  & \leq 4\frac{Av_{f,3}}{n}\Bigg\{\sup_{a:\norm{a}_1=1}\sum_{k=1}^n \E \left[(a^\top X_k)^2 \ln\left(\frac{e\abs{a^\top X_k}}{\norm{a}_\Sigma}\right)\right]\I(\abs{a^\top X_k} \geq \sqrt{n}\norm{a}_\Sigma) + \\
      & \sup_{a:\norm{a}_1=1}\frac{1}{\norm{a}_\Sigma} \sum_{k=1}^n\E\frac{\abs{a^\top X_k}^3}{\sqrt{n}}\I(\abs{a^\top _k} < \sqrt{n}\norm{a}_\Sigma)  \Bigg\}
\end{split}
\end{equation*}
for an absolute constant A.
\end{theorem}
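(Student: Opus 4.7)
The plan is to mirror the proof of \Cref{Theorem 2 of klusowski2018approximation} at one higher order. The key first step is a three-term Taylor-integral identity for $e^{iz}$: by Taylor's theorem with integral remainder applied to $e^{iz}$, one obtains
\begin{equation*}
e^{iz} - 1 - iz - \frac{(iz)^2}{2} = \frac{i}{2}\int_0^c \bigl[(-z-u)_+^2 e^{-iu} - (z-u)_+^2 e^{iu}\bigr]\,du, \qquad \abs{z}\le c.
\end{equation*}
The verification splits on the sign of $z$: for $z\ge 0$ only the second summand on the right is nonzero and one checks directly by integration by parts that $\int_0^z (z-u)^2 e^{iu}du = 2i\bigl(e^{iz}-1-iz+z^2/2\bigr)$; for $z<0$ only the first summand contributes, via the parallel computation with $w=-z$.

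Next, I would specialize with $c=\norm{\omega}_1$, $z=\omega^\top x$, $a(\omega)=\omega/\norm{\omega}_1$, and change variable $u=\norm{\omega}_1 t$, pulling out a factor of $\norm{\omega}_1^3$. Multiplying by $\mathcal{F}(f)(\omega)=e^{ib(\omega)}\abs{\mathcal{F}(f)(\omega)}$ and integrating over $\omega\in\R^d$ produces, on the right,
\begin{equation*}
\int_{\R^d}\mathcal{F}(f)(\omega)\Bigl[e^{i\omega^\top x} - 1 - i\omega^\top x - \tfrac{(i\omega^\top x)^2}{2}\Bigr] d\omega = f(x) - f(0) - \nabla f(0)^\top x - \tfrac{x^\top \nabla_2 f(0)\,x}{2},
\end{equation*}
by the standard Fourier identities for $f$, $\nabla f(0)$, and $\nabla_2 f(0)$. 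Fubini's theorem applies exactly as in the proof of \Cref{Theorem 2 of klusowski2018approximation}: the double integrand is dominated by a constant multiple of $\norm{x}_\infty^2\,\norm{\omega}_1^3\abs{\mathcal{F}(f)(\omega)}$, whose $\omega$-integral is finite because $v_{f,3}<\infty$. Taking real parts (allowed because $f$ is real-valued) converts the factor $i\,e^{ib(\omega)}e^{\pm i\norm{\omega}_1 t}$ into $\mp\sin(\norm{\omega}_1 t \pm b(\omega))$, producing a representation of the form
\begin{equation*}
\begin{aligned}
&f(x) - f(0) - \nabla f(0)^\top x - \tfrac{x^\top \nabla_2 f(0)\, x}{2} \\
&\quad= \tfrac{1}{2}\int_{\R^d}\int_0^1 \norm{\omega}_1^3 \abs{\mathcal{F}(f)(\omega)} \Bigl[(a^\top x - t)_+^2\sin(\norm{\omega}_1 t + b(\omega)) + (-a^\top x - t)_+^2\sin(\norm{\omega}_1 t - b(\omega))\Bigr] dt\, d\omega.
\end{aligned}
\end{equation*}
Repeating the packaging trick from the $s=2$ proof, I would set $\eta(t,\omega)=\sign(\sin(\cdot))\in\{\pm 1\}$, absorb $\abs{\sin(\cdot)}$ into the density, and collapse the two summands into a single probability measure $P$ on $[0,1]\times\{a:\norm{a}=1\}$ by identifying the ``$-a^\top x - t$'' contribution with the direction $-a(\omega)$. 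Since $\abs{\sin}\le 1$ and there are two contributing terms, the normalizing constant $v$ satisfies $\abs{v}\le 2v_{f,3}$, which yields the claimed integral representation with $s=3$.

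Finally, to deduce the bound on $\abs{\E f(S_{n,X}) - \E f(Z)}$, I would observe that the quadratic polynomial $p(x) = f(0) + \nabla f(0)^\top x + \tfrac{x^\top \nabla_2 f(0)\, x}{2}$ has the same expectation under $S_{n,X}$ and $Z$: both have mean zero and common covariance $\Sigma$, so $\E[S_{n,X}^\top \nabla_2 f(0)\,S_{n,X}] = \mathrm{tr}(\nabla_2 f(0)\,\Sigma) = \E[Z^\top \nabla_2 f(0)\,Z]$. Consequently $\E f(S_{n,X}) - \E f(Z) = \E g(S_{n,X}) - \E g(Z)$, where $g(x) = f(x) - p(x)$ is precisely the function given by the $s=3$ integral representation just obtained. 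Applying the $s=3$ bound of \Cref{bound for functions which have an integral representation of the form given in Theorem 1 of klusowski2018approximation} with $\abs{v}\le 2v_{f,3}$ then yields the stated inequality; the prefactor $4Av_{f,3}/n$ is exactly the ``$2$'' already appearing in that bound multiplied by $\abs{v}\le 2v_{f,3}$. The main obstacle, I expect, is executing this bookkeeping cleanly: getting the signs exactly right in the Taylor-integral identity and propagating them through the Fourier inversion and real-part extraction so that $\eta\in\{\pm 1\}$ and $\abs{v}\le 2v_{f,3}$ come out with the claimed constants. The Fubini step and the reduction from $f$ to $g$ are otherwise essentially identical to the $s=2$ case, with one extra power of $\norm{\omega}_1$ in the envelope.
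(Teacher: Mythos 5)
Your proposal is correct and follows essentially the same route as the paper: the same order-three Taylor--integral identity for $e^{iz}$, the same Fubini/real-part packaging into a probability measure with $\abs{v}\le 2v_{f,3}$, the same observation that the quadratic polynomial has equal expectation under $S_{n,X}$ and $Z$ (via $\mathrm{tr}(\nabla_2 f(0)\Sigma)$), and the same application of the $s=3$ bound to get the factor $4Av_{f,3}/n$. If anything, you verify the key identity and the cancellation of the quadratic term more explicitly than the paper does.
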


\begin{proof}
To show that  $\exists$ a probability measure $P$ on $[0,1]\times \{a: \norm{a} = 1\}$, $\eta \in \{\pm 1\}$, $s = 3$ and $v$ such that $\abs{v} \leq v_{f,3}$, such that,$f(x) - f(0) - \nabla f(0)^\top x - \frac{x^\top \nabla_2f(0) x}{2} =  v\int_{[0,1]\times\{a:\norm{a}=1\}}\eta(t,a)(a^\top x - t)^{s-1}_+dP(t,a)$, $\forall x \in D$, we use a technique exactly similar to that used in \cref{Theorem 2 of klusowski2018approximation}.
The function $f(x) - \frac{x^\top\nabla\nabla^\top x f(0)}{2} - x^\top \nabla f(0) - f(0)$ can be written as the real part of
\begin{equation*}
    \int_{\R^d}\Bigg(e^{i\omega^\top x} + \frac{(\omega ^\top x)^2}{2} - i\omega^\top x - 1\Bigg) \F(f)(\omega)d\omega.
\end{equation*}
As before, the above integrand admits an integral representation given by
\begin{equation*}
    \frac{i}{2}\norm{\omega}_1^3 \int_0^1[(-a^\top x - t)_+^2 e^{-i\norm{\omega}_1t} - (a^\top x - t)_+^2 e^{i\norm{\omega}_1t}]dt,
\end{equation*} 
which can be used to show that 
\begin{equation*}
    f(x) - \frac{x^\top\nabla\nabla^\top x f(0)}{2} - x^\top \nabla f(0) - f(0) = \frac{v}{2} \int_{\{-1,1\}\times[0,1]\times\R^d} h(z,t,a)(x) dP(z,t,\omega), 
\end{equation*}
where, 
\begin{equation*}
    h(z,t,a) sgn \sin(z\norm{\omega}_1t + b(\omega)) (za^\top x - t)^2_+
\end{equation*}
and
\begin{equation*}
    dP(z,t,\omega) = \frac{1}{v}\abs{\sin(z\norm{\omega}_1t + b(\omega)} \norm{\omega}_1^3 \abs{\F(f)(\omega)} dt d\omega,
\end{equation*}
\begin{equation*}
    v = \int_{\R^d}\int_0^1[\abs{\sin(\norm{\omega}_1t + b(\omega))}+\abs{\sin(\norm{\omega}_1t - b(\omega))}]\norm{\omega}_1^3\abs{\F(f)(\omega)}dtd\omega \le 2v_{f,3}
\end{equation*} 

Thus, we have proved that, $\exists$ a probability measure $P$ on $[0,1]\times \{a: \norm{a} = 1\}$, $\eta \in \{\pm 1\}$, $s = 3$ and $v$ such that $\abs{v} \leq v_{f,3}$, such that,
\begin{equation*}
    f(x) - f(0) - \nabla f(0)^\top x - \frac{x^\top \nabla_2f(0) x}{2} =  v\int_{[0,1]\times\{a:\norm{a}=1\}}\eta(t,a)(a^\top x - t)^{s-1}_+dP(t,a) \quad \forall x \in D.
\end{equation*}
Then, 
\begin{equation}
\begin{split}
        \E f(S_{n,X}) - \E f(Z) & = \E\Bigg[f(0) + \nabla f(0)^\top S_{n,X} +  \frac{S_{n,X}^\top \nabla_2f(0) S_{n,X}}{2} +   v\int_{[0,1]\times\{a:\norm{a}=1\}}\eta(t,a)(a^\top S_{n,X} -  t)^{s-1}_+dP(t,a)\Bigg] - \\
        & E\Bigg[f(0) + \nabla f(0)^\top Z +  \frac{Z^\top \nabla_2f(0) Z}{2} +   v\int_{[0,1]\times\{a:\norm{a}=1\}}\eta(t,a)(a^\top Z - t)^{s-1}_+dP(t,a)\Bigg] \\
        & = \nabla f(0)^\top \E(S_{n,X} - Z) + \frac{1}{2}\E\left(S_{n,X}^\top \nabla_2f(0) S_{n,X} - Z^\top \nabla_2f(0) Z\right) + \E g(S_{n,X}) - \E g(Z) \\
        & \text{where, } g \text{ a representation of the form } g(x) = v\int_{[0,1]\times\{a:\norm{a}=1\}}\eta(t,a)(a^\top x - t)^{s-1}_+dP(t,a)\\
        & = \E g(S_{n,X}) - \E g(Z)
\end{split}
\end{equation}
Then, we have, 
\begin{equation}
    \begin{split}
       \abs{\E f(S_{n,X}) - \E f(Z)} & = \abs{\E g(S_{n,X}) - \E g(Z)} \\ 
       &  \leq 2\frac{A\abs{v}}{n}\Bigg\{\sup_{a:\norm{a}_1=1}\sum_{k=1}^n \E \left[(a^\top X_k)^2 \ln\left(\frac{e\abs{a^\top X_k}}{\norm{a}_\Sigma}\right)\right]\I(\abs{a^\top X_k} \geq \sqrt{n}\norm{a}_\Sigma) + \\
      & \sup_{a:\norm{a}_1=1}\frac{1}{\norm{a}_\Sigma} \sum_{k=1}^n\E\frac{\abs{a^\top X_k}^3}{\sqrt{n}}\I(\abs{a^\top _k} < \sqrt{n}\norm{a}_\Sigma)  \Bigg\} \quad [\text{using the bound for } s = 3 \text{ in \Cref{bound for functions which have an integral representation of the form given in Theorem 1 of klusowski2018approximation}}]\\
      & \leq  4\frac{Av_{f,3}}{n}\Bigg\{\sup_{a:\norm{a}_1=1}\sum_{k=1}^n \E \left[(a^\top X_k)^2 \ln\left(\frac{e\abs{a^\top X_k}}{\norm{a}_\Sigma}\right)\right]\I(\abs{a^\top X_k} \geq \sqrt{n}\norm{a}_\Sigma) + \\
      & \sup_{a:\norm{a}_1=1}\frac{1}{\norm{a}_\Sigma} \sum_{k=1}^n\E\frac{\abs{a^\top X_k}^3}{\sqrt{n}}\I(\abs{a^\top _k} < \sqrt{n}\norm{a}_\Sigma)  \Bigg\}
    \end{split}
\end{equation}
\end{proof}
\section{Discussion}\label{sec:discussion}
In this paper, we have provided bounds on the difference between functions of random variables using level sets of functions. Using classical uniform and non-uniform Berry--Esseen bounds for univariate random variables. The resulting bounds can be applied to single-layer neural networks and functions on $[-1, 1]^d$ with finite weighted norm integrable Fourier transform. These functions belong to the functions in Barron space. Unlike the classical bounds that depend on the oscillation function of $f$, our bounds do not have an explicit dimension dependence.
In part II, we will explore extensions of these results to functions with integrable Fourier transforms on $\mathbb{R}^d$ and moreover, explore bounds obtained using function approximation theory using radial basis or neural networks.
\paragraph{Acknowledgments.} This work is partially supported by NSF DMS–2113611.

\bibliography{references}
\bibliographystyle{apalike}
\end{document}